\newtheorem{theorem}{Theorem}[section]
\newtheorem{corollary}[theorem]{Corollary}
\newtheorem{definition}[theorem]{Definition}
\newtheorem{example}[theorem]{Example}
\newtheorem{proposition}[theorem]{Proposition}
\newtheorem{remark}[theorem]{Remark}
\numberwithin{equation}{section}
\begin{document}
\title[ $\eta$- Ricci Solitons on Kenmotsu manifold with  Generalized Symmetric Metric Connection....]{$\eta$-Ricci Solitons on Kenmotsu manifold with Generalized Symmetric Metric Connection}
\author{ Mohd. Danish Siddiqi and O\u{g}uzhan Bahad\i r}
\maketitle

\begin{abstract}
The objective of the present paper is to study the $\eta$-Ricci solitons on Kenmotsu manifold with generalized symmetric metric connection of type $(\alpha,\beta)$. There are discussed Ricci and $\eta$-Ricci solitons with generalized symmetric metric connection of type $(\alpha,\beta)$ satisfying the conditions $\bar{R}.\bar{S}=0$, $\bar{S}.\bar{R}=0$, $\bar{W_{2}}.\bar{S}=0$ and $\bar{S}.\bar{W_{2}}=0.$. Finally, we construct an example of Kenmotsu manifold with generalized symmetric metric connection of type $(\alpha,\beta)$ admitting $\eta$-Ricci solitons.
\end{abstract}

\medskip \noindent \textbf{Mathematics Subject Classification.} 53C05, 53D15, 53C25.
 \medskip

\noindent \textbf{Keywords.} Kenmotsu manifold, Generalized symmetric metric connection, $\eta$-Ricci soliton, Ricci soliton, Einstein manifold.
\section{\textbf{Introduction}}
A
linear connection $\overline{\nabla}$ is said to be generalized symmetric connection if its
torsion tensor $T$ is of the form
\begin{equation}
T(X,Y)=\alpha \{u(Y)X-u(X)Y\}+\beta \{u(Y)\varphi X-u(X)\varphi Y\},  \label{Int-2}
\end{equation}
for any vector fields $X,Y$ on a manifold, where $\alpha$ and $\beta$ are smooth functions. $%
\varphi$ is a tensor of type $(1,1)$ and $u$ is a $1-$form associated with a non-vanishing smooth non-null unit vector field $\xi$.
Moreover, the connection $\overline{\nabla}$ is said to be a generalized symmetric metric connection if there is a Riemannian metric $g$ in $M$ such that $\overline{\nabla}g=0$, otherwise it is non-metric.

In the equation (\ref{Int-2}), if $\alpha=0$ ($\beta=0$), then the generalized symmetric connection is called $\beta-$ quarter-symmetric connection ( $\alpha-$ semi-symmetric connection), respectively. Moreover, if we choose $(\alpha,\beta)=(1,0)$ and $(\alpha,\beta)=(0,1)$, then the generalized symmetric connection is reduced to a semi-symmetric connection and quarter-symmetric connection, respectively. Hence a generalized symmetric connections can be viewed as a generalization of semi-symmetric connection and quarter-symmetric connection. This two connection are important for both the geometry study and applications to physics. In \cite{ref12}, H. A. Hayden introduced a metric connection with non-zero
torsion on a Riemannian manifold. The properties of Riemannian manifolds
with semi-symmetric (symmetric) and non-metric connection have been studied
by many authors (see \cite{ref1}, \cite{ref9}, \cite{ref10} , \cite{ref24}, \cite{ref26}). The idea of quarter-symmetric linear connections in a differential manifold was introduced by S.Golab \cite{ref11}.
In \cite{ref23}, Sharfuddin and Hussian defined a semi-symmetric metric connection in an
almost contact manifold, by setting
$$T(X,Y)=\eta(Y)X-\eta(X)Y.$$
In \cite{ref13}, \cite{ref25} and \cite{ref19} the authors studied the semi-symmetric metric connection and
semi-symmetric non-metric connection in a Kenmotsu manifold, respectively.

In the present paper, we defined new connection for Kenmotsu manifold, generalized symmetric metric connection. This connection is the generalized form of semi-symmetric metric connection and quarter-symmetric metric connection.

\indent
On the other hand A Ricci soliton is a natural generalization of an Einstein metric . In 1982, R. S. Hamilton \cite{ref14} said that the Ricci solitons move under the Ricci flow simply by diffeomorphisms of the initial metric, that is, they are sationary points of the Ricci flow:
\begin{equation}
\frac{\partial g}{\partial t}=-2Ric(g).
\end{equation}
\begin{definition}
A  Ricci soliton $(g,V,\lambda)$ on a Riemannian manifold is defined by
\begin{equation}
\mathcal{L}_{V}g+2S+2\lambda=0,
\end{equation}
where $S$ is the Ricci tensor, $\mathcal{L}_{V}$ is the Lie derivative along the vector field $V$ on $M$ and $\lambda$ is a real scalar. Ricci soliton is said to be shrinking, steady or expanding according as $\lambda< 0, \lambda= 0$ and $\lambda > 0$, respectively.
\end{definition}
\indent
In 1925,  H. Levy \cite{ref16} in Theorem: 4,  proved that a second order parallel symmetric non-sigular tensor in real space forms is proportional the metric tensor.
Later, R. Sharma \cite{ref22} initiated the study of Ricci solitons in contact Riemannian geometry . After that,  Tripathi \cite{ref28}, Nagaraja et. al. \cite{ref17} and others like C. S. Bagewadi et. al. \cite{ref4} extensively studied Ricci solitons in almost contact metric manifolds. In 2009, J. T. Cho and M. Kimura \cite{ref6} introduced the notion of $\eta$-Ricci solitons and gave a classification of real hypersurfaces in non-flat complex space forms admitting $\eta$-Ricci solitons. $\eta$- Ricci solitons in almost paracontact metric manifolds have been studied by A. M. Blaga et. al. \cite{ref2}.  A. M. Blaga and various others authors also have been studied $\eta$-Ricci solitons  in manifolds with different structures (see \cite{ref3}, \cite{ref20}). It is natural and interesting to study $\eta$-Ricci solitons in almost contact metric manifolds with this new connection.\\
\indent
Therefore, motivated by the above studies, in this paper we study the  $\eta$-Ricci solitons in a Kenmotsu manifold
with respect to a generalized symmetric metric connection. We shall consider $\eta$-Ricci solitons in the almost contact geometry, precisely, on an Kenmotsu manifold with  generalized symmetric metric connection which satisfies certain curvature properties: $\bar{R}.\bar{S}=0$, $\bar{S}.\bar{R}=0$, $W_{2}.\bar{S}=0$ and $\bar{S}.\bar{W_{2}}=0$ respectively.

\section{\textbf{Preliminaries}\label{sect-Kenmotsu}}

A differentiable $M$ manifold of dimension $n=2m+1$ is called almost contact metric manifold \cite{ref5}, if it admit a $(1,1)$ tensor field $\phi$, a contravaryant vector field $\xi$, a $1-$ form $\eta$ and Riemannian metric $g$ which satify
\begin{eqnarray}
\phi \xi&=&0,\\
\eta(\phi X)&=&0\\
\eta(\xi)&=&1, \label{2.1}\\
\phi^{2}(X)&=&-X+\eta(X)\xi, \label{2.2}\\
g(\phi X,\phi Y)&=&g(X,Y)-\eta(X)\eta(Y), \label{2.3}\\
g(X,\xi)&=&\eta(X), \label{2.4}
\end{eqnarray}
for all vector fields $X$, $Y$ on $M$. If we write $g(X,\phi Y)=\Phi(X,Y)$, then the tensor field $\phi$ is a anti-symmetric $(0,2)$ tensor field \cite{ref5}. If an almost contact metric manifold satisfies
\begin{eqnarray}
(\nabla_{X}\phi)Y&=&g(\phi X,Y)\xi-\eta(Y)\phi X, \label{2.9}\\
\nabla_{X}\xi&=&X-\eta(X)\xi,
\end{eqnarray}
then $M$ is called a Kenmotsu manifold, where $\nabla$ is the Levi-Civita connection of $g$ \cite{ref18}.

 In Kenmotsu manifolds the following relations hold \cite{ref18}:
\begin{eqnarray}
(\nabla_{X}\eta)Y&=&g(\phi X,\phi Y)\\
g(R(X,Y)Z,\xi)&=&\eta(R(X,Y)Z)=g(X,Z)\eta(Y)-g(Y,Z)\eta(X), \label{2.11}\\
R(\xi,X)Y&=&\eta(Y)X-g(X,Y)\xi, \label{2.12}\\
R(X,Y)\xi&=&\eta(X)Y-\eta(Y)X, \label{2.13}\\
R(\xi,X)\xi&=&X-\eta(X)\xi, \label{2.14}\\
S(X,\xi)&=&-(n-1)\eta(X), \label{2.15}\\
S(\phi X,\phi Y)&=&S(X,Y)+(n-1)\eta(X)\eta(Y) \label{2.16}
\end{eqnarray}
for any vector fields $X$, $Y$ and $Z$, where $R$ and $S$ are the the curvature and Ricci the tensors of $M$, respectively.

A Kenmotsu manifold $M$ is said to be generalized $\eta$ Einstein if its Ricci tensor S is of the form
\begin{eqnarray}
S(X,Y)=ag(X,Y)+b\eta(X)\eta(Y)+c g(\phi X,Y),
\end{eqnarray}
for any $X,Y\in\Gamma(TM)$, where $a$, $b$ and $c$ are scalar functions such that $b\neq0$ and $c\neq0$. If $c=0$ then $M$ is called $\eta$ Einstein manifold.

\section{Generalized Symmetric Metric Connection in a Kenmotsu Manifold}

Let $\overline{\nabla}$ be a linear connection and $\nabla$ be a Levi-Civita connection of an almost contact metric manifold $M$ such that

\begin{eqnarray}\label{2.19}
\overline{\nabla}_{X}Y=\nabla_{X}Y+H(X,Y),
\end{eqnarray}

for any vector field $X$ and $Y$. Where $H$ is a tensor of type $(1,2)$. For $\overline{\nabla}$ to be a generalized symmetric metric connection of $\nabla$, we have
\begin{eqnarray}\label{2.20}
H(X,Y)=\frac{1}{2}[T(X,Y)+T^{'}(X,Y)+T^{'}(Y,X)],
\end{eqnarray}

where $T$ is the torsion tensor of $\overline{\nabla}$ and
\begin{eqnarray}
g(T^{'}(X,Y),Z)=g(T(Z,X),Y).\label{2.21}
\end{eqnarray}
From (\ref{Int-2}) and (\ref{2.21}) we get
\begin{eqnarray}
T^{'}(X,Y)=\alpha \{\eta (X)Y-g(X,Y)\xi\}+\beta\{-\eta (X)\phi Y-g(\phi X,Y)\xi\}.\label{2.22}
\end{eqnarray}
Using (\ref{Int-2}), (\ref{2.20}) and (\ref{2.22}) we obtain
\begin{eqnarray}
H(X,Y)=\alpha \{\eta (Y)X-g(X,Y)\xi\}+\beta\{-\eta (X)\phi Y\}.\label{2.23}
\end{eqnarray}
\begin{corollary}
For a Kenmotsu manifold, generalized symmetric metric connection $\overline{\nabla}$ is given by
\begin{eqnarray}
\overline{\nabla}_{X}Y=\nabla_{X}Y+\alpha \{\eta (Y)X-g(X,Y)\xi\}-\beta\eta (X)\phi Y.\label{konnek}
\end{eqnarray}
\end{corollary}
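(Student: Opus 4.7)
The plan is essentially to substitute the expression for $H(X,Y)$ derived in (\ref{2.23}) into the Cartan-type decomposition (\ref{2.19}). On a Kenmotsu manifold the 1-form $u$ and the $(1,1)$-tensor $\varphi$ appearing in (\ref{Int-2}) are identified with the structural 1-form $\eta$ and the contact tensor $\phi$, after which the corollary is essentially a rewriting of (\ref{2.19}) with (\ref{2.23}). I would therefore present the proof as a short verification making these substitutions explicit, with the structural identities from Section~\ref{sect-Kenmotsu} playing the only geometric role.

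Concretely, I would first recall (\ref{Int-2}) in the Kenmotsu form
\begin{equation*}
T(X,Y) = \alpha[\eta(Y)X - \eta(X)Y] + \beta[\eta(Y)\phi X - \eta(X)\phi Y].
\end{equation*}
Next, starting from the defining relation (\ref{2.21}), I would solve for $T'(X,Y)$ by testing against an arbitrary vector field $Z$ and using $g(X,\xi) = \eta(X)$ together with the skew-adjointness $g(\phi X, Y) = -g(X, \phi Y)$ that follows from the antisymmetry of $\Phi$; this recovers (\ref{2.22}).

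Substituting $T$, $T'(X,Y)$ and $T'(Y,X)$ into the symmetrization (\ref{2.20}) then yields $H$. The key observation is that many terms cancel in pairs: the $\alpha\eta(X)Y$ contributions from $T$ and $T'(X,Y)$ cancel, the $\beta\eta(Y)\phi X$ contributions from $T$ and $T'(Y,X)$ cancel, and the $\xi$-components carrying $g(\phi X, Y)$ and $g(\phi Y, X)$ cancel by antisymmetry. What survives is precisely
\begin{equation*}
H(X,Y) = \alpha[\eta(Y)X - g(X,Y)\xi] - \beta \eta(X) \phi Y,
\end{equation*}
and inserting this into (\ref{2.19}) produces the asserted formula (\ref{konnek}).

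No serious obstacle arises: the argument is purely algebraic and relies only on the antisymmetry of $\Phi$ and on the metric duality $g(\cdot, \xi) = \eta(\cdot)$ available on any almost contact metric manifold. The only point requiring momentary care is the sign bookkeeping in the $\beta$-terms when moving between $g(\phi X, Y)$ and $g(X, \phi Y)$, which is where the cancellation of the $\xi$-component of $H$ originates; no Kenmotsu-specific identity (such as (\ref{2.9})) is actually needed for this statement, since it concerns only the \emph{definition} of the connection rather than its curvature.
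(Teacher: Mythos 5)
Your proposal is correct and follows essentially the same route as the paper: compute $T'$ from (\ref{2.21}) using $g(X,\xi)=\eta(X)$ and the antisymmetry of $\Phi$, substitute $T$, $T'(X,Y)$, $T'(Y,X)$ into (\ref{2.20}) to obtain $H(X,Y)=\alpha\{\eta(Y)X-g(X,Y)\xi\}-\beta\eta(X)\phi Y$, and insert this into (\ref{2.19}). Your cancellation bookkeeping checks out, and your side remark that only the almost contact metric identities (not any Kenmotsu-specific relation such as (\ref{2.9})) are used is accurate.
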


If we choose  $(\alpha,\beta)=(1,0)$ and $(\alpha,\beta)=(0,1)$, generalized metric connection is reduced a semi-symmetric metric connection and quarter-symmetric metric connection as follows:
\begin{eqnarray}
\overline{\nabla}_{X}Y=\nabla_{X}Y+\eta (Y)X-g(X,Y)\xi,\label{konnek1}
\end{eqnarray}

\begin{eqnarray}\label{konnek2}
\overline{\nabla}_{X}Y=\nabla_{X}Y-\eta (X)\phi Y.
\end{eqnarray}

From (\ref{konnek}) we have the following proposition
\begin{proposition} \label{pro}
Let M be a Kenmotsu manifold with generalized metric connection. We have the following relations:
\begin{eqnarray}
(\overline{\nabla}_{X}\phi)Y&=&(\alpha+1)\{g(\phi X,Y)\xi-\eta (Y)\phi X\},\\
\overline{\nabla}_{X}\xi&=&(\alpha+1)\{X-\eta (X)\xi\},\\
(\overline{\nabla}_{X}\eta)Y&=&(\alpha+1)\{g(X,Y)-\eta (Y)\eta(X)\},
\end{eqnarray}
for any $X,Y,Z\in\Gamma(TM)$.
\end{proposition}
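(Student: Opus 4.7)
The plan is to derive each of the three identities by direct substitution of the connection formula (\ref{konnek}) and then simplifying via the standard Kenmotsu structure equations together with the basic contact relations $\phi\xi=0$, $\eta(\phi Y)=0$, $g(X,\xi)=\eta(X)$, and $\phi^{2}Y=-Y+\eta(Y)\xi$. The common factor $(\alpha+1)$ in all three conclusions will arise because the Levi--Civita terms contribute a ``$1$'' and the $\alpha$--part of $H(X,Y)$ contributes the extra ``$\alpha$,'' while all $\beta$--terms cancel.

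First I would dispose of the middle identity, which is essentially one line. Setting $Y=\xi$ in (\ref{konnek}) gives
\begin{equation*}
\overline{\nabla}_{X}\xi=\nabla_{X}\xi+\alpha\{\eta(\xi)X-g(X,\xi)\xi\}-\beta\eta(X)\phi\xi,
\end{equation*}
and using $\phi\xi=0$, $\eta(\xi)=1$, $g(X,\xi)=\eta(X)$, together with the Kenmotsu formula $\nabla_{X}\xi=X-\eta(X)\xi$, the $\beta$--term drops out and the remaining terms collapse to $(\alpha+1)\{X-\eta(X)\xi\}$.

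Next I would prove the first identity by expanding
\begin{equation*}
(\overline{\nabla}_{X}\phi)Y=\overline{\nabla}_{X}(\phi Y)-\phi(\overline{\nabla}_{X}Y)
\end{equation*}
using (\ref{konnek}) for both terms. The $\beta$--contributions produce $-\beta\eta(X)\phi(\phi Y)$ from each side; since these enter with the same sign and $\phi^{2}Y=-Y+\eta(Y)\xi$, they cancel exactly. The Levi--Civita part contributes $(\nabla_{X}\phi)Y=g(\phi X,Y)\xi-\eta(Y)\phi X$ by (\ref{2.9}). The surviving $\alpha$--terms are $-\alpha g(X,\phi Y)\xi-\alpha\eta(Y)\phi X$; invoking the antisymmetry $g(X,\phi Y)=-g(\phi X,Y)$ rewrites the first as $\alpha g(\phi X,Y)\xi$, and collecting gives the claimed factor $(\alpha+1)$. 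This is the step with the most bookkeeping and will be the main obstacle, though nothing deeper than careful sign tracking is needed.

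Finally, for $(\overline{\nabla}_{X}\eta)Y$ the cleanest route is to exploit that $\overline{\nabla}$ is a metric connection. Writing $\eta(Y)=g(Y,\xi)$, we have
\begin{equation*}
(\overline{\nabla}_{X}\eta)Y=X\bigl(g(Y,\xi)\bigr)-\eta(\overline{\nabla}_{X}Y)=g(\overline{\nabla}_{X}Y,\xi)+g(Y,\overline{\nabla}_{X}\xi)-\eta(\overline{\nabla}_{X}Y)=g(Y,\overline{\nabla}_{X}\xi),
\end{equation*}
and substituting the second identity just proved, together with $g(Y,\xi)=\eta(Y)$, yields the desired expression $(\alpha+1)\{g(X,Y)-\eta(X)\eta(Y)\}$. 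Thus the three identities reduce, in order, to a one--line computation, a careful term--by--term cancellation, and a short consequence of metricity plus the identity for $\overline{\nabla}_{X}\xi$.
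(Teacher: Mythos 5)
Your computation is correct and is exactly the direct verification the paper intends: the authors give no written proof beyond the remark that the proposition follows from (\ref{konnek}), and your term-by-term expansion (with the $\beta$-terms cancelling identically in the first identity and vanishing via $\phi\xi=0$ in the second, plus metricity of $\overline{\nabla}$ for the third) is the standard way to fill that in. No gaps.
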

\section{Curvature Tensor on Kenmotsu manifold with generalized symmetric metric connection}

Let $M$ be an $n-$ dimensional Kenmotsu manifold. The curvature tensor $%
\overline{R}$ of the generalized metric connection $\overline{%
\nabla }$ on $M$ is defined by
\begin{eqnarray}
\overline{R}(X,Y)Z={\overline{\nabla}}_{X}{\overline{\nabla}}_{Y}Z-{\overline{\nabla}}_{Y}{\overline{\nabla}}_{X}Z-{\overline{\nabla}}_{[X,Y]}Z, \label{4.1}
\end{eqnarray}
Using proposition \ref{pro}, from (\ref{konnek}) and (\ref{4.1}) we have
\begin{eqnarray}\label{4.2}
\bar{R}(X,Y)Z&=&R(X,Y)Z+\{(-\alpha^{2}-2\alpha)g(Y,Z)+(\alpha^{2}+a)\eta(Y)\eta(Z)\}X\\
&+&\{(\alpha^{2}+2\alpha)g(X,Z)+(-\alpha^{2}-\alpha)\eta(X)\eta(Z)\}Y \nonumber\\
&+&\{(\alpha^{2}+\alpha)[g(Y,Z)\eta(X)-g(X,Z)\eta(Y)]\nonumber\\
&+&(\beta+\alpha\beta)[g(X,\phi Z)\eta(Y)-g(Y,\phi Z)\eta(X)]\}\xi\nonumber\\
&+&(\beta+\alpha\beta)\eta(Y)\eta(Z)\phi X-(\beta+\alpha\beta)\eta(X)\eta(Z)\phi Y\nonumber
\end{eqnarray}
where
\begin{eqnarray}
R(X,Y)Z={\nabla}_{X}{\nabla}_{Y}Z-{\nabla}_{Y}{\nabla}_{X}Z-{\nabla}_{[X,Y]}Z, \label{4.3}
\end{eqnarray}
is the curvature tensor with respect to the Levi-Civita connection $\nabla$.
Using (\ref{4.2}) and the first Bianchi identity we have
\begin{eqnarray}\label{4.4}
\bar{R}(X,Y)Z+\bar{R}(Y,Z)X+\bar{R}(Z,X)Y
\end{eqnarray}
\begin{eqnarray}\nonumber
=2(\beta+\alpha\beta)\{\eta(X)g(\phi Y,Z)+\eta(Y)g(X,\phi Z)+\eta(Z)g(Y,\phi X)\}.
\end{eqnarray}
Hence we have the following proposition
\begin{proposition}
Let $M$ be an $n-$ dimensional Kenmotsu manifold with generalized symmetric metric connection of type $(\alpha,\beta)$. If $(\alpha,\beta)=(-1,\beta)$ or $(\alpha,\beta)=(\alpha,0)$ then the first Bianchi identity of the generalized symmetric metric connection $\overline{\nabla}$ on $M$ is provided.
\end{proposition}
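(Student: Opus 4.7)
The proposition is essentially an immediate reading of equation (\ref{4.4}), so my plan is to argue directly from that identity rather than redo the curvature computation. The key observation is that the entire right-hand side of (\ref{4.4}) carries a common factor $(\beta + \alpha\beta) = \beta(1+\alpha)$. Once this factor is isolated, the statement becomes a trivial case analysis.

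First I would write (\ref{4.4}) in the form
\begin{equation*}
\bar{R}(X,Y)Z+\bar{R}(Y,Z)X+\bar{R}(Z,X)Y = 2\beta(1+\alpha)\bigl\{\eta(X)g(\phi Y,Z)+\eta(Y)g(X,\phi Z)+\eta(Z)g(Y,\phi X)\bigr\}.
\end{equation*}
Then I would note that the factor $\beta(1+\alpha)$ vanishes precisely when $\beta = 0$ (arbitrary $\alpha$) or when $\alpha = -1$ (arbitrary $\beta$). These are exactly the two cases $(\alpha,\beta) = (\alpha,0)$ and $(\alpha,\beta)=(-1,\beta)$ listed in the proposition. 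In either case the right-hand side above is the zero tensor, so
\begin{equation*}
\bar{R}(X,Y)Z+\bar{R}(Y,Z)X+\bar{R}(Z,X)Y = 0
\end{equation*}
for all $X,Y,Z \in \Gamma(TM)$, which is the first Bianchi identity for $\overline{\nabla}$.

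There is really no substantial obstacle here; the work was already done in deriving (\ref{4.4}) from (\ref{4.2}) via the first Bianchi identity for the Levi-Civita connection $\nabla$. The only thing worth double-checking is the algebra of the coefficient: one should verify that after cyclically summing the terms in (\ref{4.2}), the pieces involving $g(Y,Z)$, $\eta(Y)\eta(Z)$, $g(X,Z)\eta(Y)$, $\eta(Y)\eta(Z)\phi X$, etc., all cancel among themselves, leaving only the $\xi$-component proportional to $\beta(1+\alpha)$, so that the factorization displayed above is genuine. Given that (\ref{4.4}) is already asserted in the paper, this cancellation may be taken as granted, and the proposition follows in two lines.
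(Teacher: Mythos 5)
Your argument is exactly the paper's: the proposition is read off from equation (\ref{4.4}), whose right-hand side carries the common factor $2(\beta+\alpha\beta)=2\beta(1+\alpha)$, which vanishes precisely when $\beta=0$ or $\alpha=-1$. The proposal is correct and takes essentially the same route, so there is nothing further to add.
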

Using (\ref{2.11}), (\ref{2.12}), (\ref{2.13}), (\ref{2.14}) and (\ref{4.2}) we give the following proposition:
\begin{proposition}
Let $M$ be an $n-$ dimensional Kenmotsu manifold with generalized symmetric metric connection of type $(\alpha,\beta)$. Then we have the following equations:
\begin{eqnarray}\label{4.5}
\bar{R}(X,Y)\xi=(\alpha+1)\{\eta(X)Y-\eta(Y)X+\beta[\eta(Y)\phi X-\eta(X)\phi Y]\}
\end{eqnarray}
\begin{eqnarray}\label{4.6}
\quad\quad \bar{R}(\xi,X)Y=(\alpha+1)\{\eta(Y)X-g(X,Y)\xi+\beta[\eta(Y)\phi X-g(X,\phi Y)\xi]\} ,
\end{eqnarray}
\begin{eqnarray}\label{4.7}
\bar{R}(\xi,Y)\xi=(\alpha+1)\{Y-\eta(Y)\xi-\beta\phi Y\},
\end{eqnarray}
\begin{eqnarray}\label{4.8}
\eta(\bar{R}(X,Y)Z=(\alpha+1)\{\eta(Y)g(X,Z)-\eta(X)g(Y,Z)
\end{eqnarray}
\begin{eqnarray}\nonumber
\quad\quad\quad+\beta[\eta(Y)g(X,\phi Z)-\eta(X)g(Y,\phi Z)]\}
\end{eqnarray}
for any $X,Y,Z\in\Gamma(TM)$.
\end{proposition}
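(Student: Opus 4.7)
The plan is to read off all four identities directly from the master formula \eqref{4.2} for $\bar R(X,Y)Z$, by specializing one or more of the arguments to $\xi$ and invoking the Kenmotsu-manifold identities \eqref{2.11}--\eqref{2.14} for the Levi-Civita curvature together with the basic normalizations $\eta(\xi)=1$ and $\phi\xi=0$. No new structural argument is needed; the proof is a bookkeeping exercise, and the only risk is sign/coefficient errors in collecting the many terms of \eqref{4.2}.

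\textbf{Step 1 (proof of \eqref{4.5}).} Set $Z=\xi$ in \eqref{4.2}. Every term $g(\cdot,Z)$ becomes $\eta(\cdot)$, every $\eta(Z)$ becomes $1$, and $\phi Z$ vanishes. The Levi-Civita piece $R(X,Y)\xi$ is replaced by $\eta(X)Y-\eta(Y)X$ using \eqref{2.13}. The two coefficients $(-\alpha^{2}-2\alpha)g(Y,\xi)+(\alpha^{2}+\alpha)\eta(Y)\eta(\xi)$ collapse to $-\alpha\,\eta(Y)$, and analogously for the $Y$-coefficient. The $\xi$-component drops out since the $g(\cdot,\phi Z)$ factors vanish and the remaining piece $(\alpha^{2}+\alpha)[\eta(Y)\eta(X)-\eta(X)\eta(Y)]\xi$ is zero. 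After collecting the $\phi X$ and $\phi Y$ terms (which contribute the $\beta+\alpha\beta=\beta(\alpha+1)$ coefficient), the whole right-hand side factors as $(\alpha+1)\{\eta(X)Y-\eta(Y)X+\beta[\eta(Y)\phi X-\eta(X)\phi Y]\}$.

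\textbf{Step 2 (proof of \eqref{4.6} and \eqref{4.7}).} Put $X=\xi$ in \eqref{4.2} and use \eqref{2.12} for $R(\xi,Y)Z$. All $\eta(X)$-factors become $1$ and all $g(X,\cdot)$-factors become $\eta(\cdot)$, while $\phi X=0$ kills two terms. The same cancellations as in Step 1 produce the common factor $(\alpha+1)$ and yield \eqref{4.6}. Equation \eqref{4.7} is then obtained either by specializing \eqref{4.6} at $Y=\xi$ (using $\eta(\xi)=1$, $\phi\xi=0$, $g(\xi,\phi Y)=0$) or, equivalently, by substituting $X=Z=\xi$ directly into \eqref{4.2} and invoking \eqref{2.14}.

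\textbf{Step 3 (proof of \eqref{4.8}).} Apply $\eta(\,\cdot\,)$ to both sides of \eqref{4.2}. The Levi-Civita term gives $\eta(R(X,Y)Z)=\eta(Y)g(X,Z)-\eta(X)g(Y,Z)$ by \eqref{2.11}. The two bracketed multiples of $X$ and $Y$ contribute $(-\alpha^{2}-2\alpha)g(Y,Z)\eta(X)+(\alpha^{2}+\alpha)\eta(Y)\eta(Z)\eta(X)$ and its $X\leftrightarrow Y$ analogue, which antisymmetrize cleanly because the $\eta(X)\eta(Y)\eta(Z)$ pieces cancel and the remaining coefficient simplifies to $\alpha$. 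The $\xi$-coefficient survives intact since $\eta(\xi)=1$, and the terms $\eta(Y)\eta(Z)\eta(\phi X)$ and $\eta(X)\eta(Z)\eta(\phi Y)$ vanish. Collecting the $g(X,Z),g(Y,Z),g(X,\phi Z),g(Y,\phi Z)$ terms yields the overall factor $(\alpha+1)$ and the desired form \eqref{4.8}.

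The main obstacle, such as it is, will be in Step 1, where the $\xi$-coefficient contains four competing terms $(\alpha^{2}+\alpha)[g(Y,Z)\eta(X)-g(X,Z)\eta(Y)]$ and $(\beta+\alpha\beta)[g(X,\phi Z)\eta(Y)-g(Y,\phi Z)\eta(X)]$, and one must verify that after setting $Z=\xi$ every one of these contributions either vanishes (via $\phi\xi=0$) or precisely matches a term produced by $R(X,Y)\xi$ and by the $\phi X,\phi Y$ terms, so that the common factor $(\alpha+1)$ can be pulled out cleanly. Once this check is performed, the remaining three identities follow by the same routine substitutions.
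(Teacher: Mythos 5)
Your proposal is correct and is exactly the paper's own (essentially unwritten) argument: the paper likewise derives all four identities by specializing the master formula \eqref{4.2} at $\xi$ and invoking the Kenmotsu identities for $R(X,Y)\xi$, $R(\xi,X)Y$, $R(\xi,X)\xi$ and $\eta(R(X,Y)Z)$ together with $\eta(\xi)=1$ and $\phi\xi=0$. One remark: carrying out your Step 2 literally gives $\bar{R}(\xi,X)Y=(\alpha+1)\{\eta(Y)X-g(X,Y)\xi-\beta[\eta(Y)\phi X+g(X,\phi Y)\xi]\}$, so the sign of the $\beta\,\eta(Y)\phi X$ term in the paper's \eqref{4.6} appears to be a misprint --- a fact your own cross-check (setting $Y=\xi$ in \eqref{4.6} to recover \eqref{4.7}) exposes, since only the corrected sign is consistent with \eqref{4.7} and with the example's $\overline{R}(E_{1},E_{3})E_{3}=(1+\alpha)(\beta E_{2}-E_{1})$.
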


\noindent
\begin{example} We consider a 3-dimensional manifold $M=\{ (x,y,z) \in R^{3}: x\neq 0  \}$, where
$(x, y, z)$ are the standard coordinates in $R^{3}$. Let ${E_{1},E_{2},E_{3}}$ be a linearly independent global
frame on $M$ given by

\begin{eqnarray}
E_{1}=x\frac{\partial}{\partial z},\;E_{2}=x\frac{\partial}{\partial y},\;E_{3}=-x\frac{\partial}{\partial x}. \label{5.1}
\end{eqnarray}
Let g be the Riemannian metric defined by $$g(E_{1},E_{2})=g(E_{1},E_{3})=g(E_{2},E_{3})=0, g(E_{1},E_{1})=g(E_{2},E_{2})=g(E_{3},E_{3})=1 ,$$
Let $\eta$ be the 1-form defined by $\eta(U)=g(U,E_{3})$, for any $U\in TM$. Let $\phi$ be the $(1,1)$ tensor field defined
by $\phi E_{1}=E_{2},\phi E_{2}=-E_{1}$ and $\phi E_{3}=0$. Then, using the linearity of $\phi$ and $g$ we have $\eta(E_{3})=1,\;\phi^{2}U=-U+\eta(U)E_{3}$ and $g(\phi U,\phi W)=g(U,W)-\eta(U)\eta(W)$ for any $U,W\in TM$. Thus for $E_{3}=\xi$, $(\phi,\xi,\eta,g)$ defines an almost contact metric manifold.

Let $\nabla$ be the Levi-Civita connection with respect to the Riemannian metric $g$. Then we
have
\begin{eqnarray}
[E_{1},E_{2}]=0,\qquad [E_{1},E_{3}]=E_{1},\qquad [E_{2},E_{3}]=E_{2},
\end{eqnarray}
Using Koszul formula for the Riemannian metric $g$, we can easily calculate
\begin{eqnarray}
\nabla_{E_{1}}E_{1}=-E_{3},\qquad \nabla_{E_{1}}E_{2}=0. \qquad \nabla_{E_{1}}E_{3}=E_{1}, \nonumber \\
\nabla_{E_{2}}E_{1}=0, \qquad \nabla_{E_{2}}E_{2}=-E_{3},\qquad \nabla_{E_{2}}E_{3}=0,\\
\nabla_{E_{3}}E_{1}=0,\qquad \nabla_{E_{3}}E_{2}=0,\qquad \nabla_{E_{3}}E_{3}=0. \nonumber
\end{eqnarray}
From the above relations, it can be easily seen that

$(\nabla_{X}\phi)Y=g(\phi X,Y)\xi-\eta(Y)\phi X,\quad \nabla_{X}\xi=X-\eta(X)\xi,$ for all $E_{3}=\xi $.
Thus the manifold $M $ is a Kenmotsu manifold with the structure $(\phi,\xi,\eta,g)$.
for $\xi= E_3$. Hence the manifold $M$ under consideration is a Kenmotsu manifold of dimension three.
\end{example}

\section{\protect\vspace{0.2cm}\textbf{Ricci and $\eta$-Ricci solitons on $(M,\phi,\xi,\eta,g,)$}}

Let $(M,\phi,\xi,\eta,g,)$ be an almost contact metric manifold. Consider the equation
\begin{eqnarray}\label{5.A1}
\mathcal{L}_{\xi}g+2\bar{S}+2\lambda+2\mu\eta\otimes\eta=0,
\end{eqnarray}
where $\mathcal{L}_{\xi}$ is the Lie derivative operator along the vector field $\xi$, $\bar{S}$ is the Ricci curvature tensor field with respect to the generalized symmetric metric connection of the metric $g$, and $\lambda$ and $\mu$ are real constants. Writing $\mathcal{L}_{\xi}$ in terms of the generalized symmetric metric connection $\bar{\nabla}$, we obtain:
\begin{eqnarray}\label{5.2}
\quad\quad 2\bar{S}(X,Y)=-g(\bar{\nabla}_{X}{\xi},Y)-g(X,\bar{\nabla}_{Y}{\xi})-2\lambda g(X,Y)-2\mu\eta(X)\eta(Y),
\end{eqnarray}
for any $X,Y\in \chi(M)$.\\
\indent
The data $(g,\xi,\lambda,\mu)$ which satisfy the equation (\ref{5.1}) is said to be an $\eta$-$\textit{Ricci soliton}$ on $M$ [10]. In particular if $\mu=0$ then $(g,\xi,\lambda)$ is called $\textit{Ricci soliton}$ \cite{ref6} and it is called $\textit{shrinking}$, $\textit{steady}$ or $\textit{expanding}$, according as $\lambda$ is negative, zero or positive respectively \cite{ref6}.\\

\indent
Here is an example of $\eta$-Ricci soliton on Kenmotsu manifold with generalized symmetric metric connection.\\
\begin{example} Let $M(\phi, \xi,\eta,g)$ be the Kenmotsu manifold considered in example 4.3 .
\end{example}
\indent
Let $\bar{\nabla}$ be a generalized symmetric metric connection, we obtain:
Using the above relations, we can calculate the non-vanishing components of the curvature tensor  as follows:
\begin{eqnarray}
R(E_{1},E_{2})E_{1}=E_{2},\;R(E_{1},E_{2})E_{2}=-E_{1},\,R(E_{1},E_{3})E_{1}=E_{3}\nonumber \\
R(E_{1},E_{3})E_{3}=-E_{1},\;R(E_{2},E_{3})E_{2}=E_{3},\;R(E_{2},E_{3})E_{3}=-E_{2} \label{re}
\end{eqnarray}
From the equations (\ref{re}) we can easily calculate the non-vanishing components of the ricci tensor  as follows:\cite{sukla}
\begin{eqnarray}
S(E_{1},E_{1})=-2,\;S(E_{2},E_{2})=-2,\;S(E_{3},E_{3})=-2
\end{eqnarray}

Now, we can make similar calculations for generalized metric connection. Using (\ref{konnek}) in the above equations, we get
\begin{eqnarray}
\overline{\nabla}_{E_{1}}E_{1}=-(1+\alpha)E_{3},&  \overline{\nabla}_{E_{1}}E_{2}=0. &  \overline{\nabla}_{E_{1}}E_{3}=(1+\alpha)E_{1}, \nonumber \\
\overline{\nabla}_{E_{2}}E_{1}=0, & \overline{\nabla}_{E_{2}}E_{2}=-(1+\alpha)E_{3},& \overline{\nabla}_{E_{2}}E_{3}=\alpha E_{2},\label{na} \\
\overline{\nabla}_{E_{3}}E_{1}=-\beta E_{2},& \overline{\nabla}_{E_{3}}E_{2}=\beta E_{1}, &\overline{\nabla}_{E_{3}}E_{3}=0. \nonumber
\end{eqnarray}
From (\ref{na}), we can calculate the non-vanishing components of curvature tensor with respect to generalized metric connection as follows:

\begin{eqnarray}
\overline{R}(E_{1},E_{2})E_{1}=(1+\alpha)^{2}E_{2},\;& \overline{R}(E_{1},E_{2})E_{2}=-(1+\alpha)^{2}E_{1},\nonumber \\
\overline{R}(E_{1},E_{3})E_{1}=(1+\alpha)E_{3}      & \overline{R}(E_{1},E_{3})E_{3}=(1+\alpha)(\beta E_{2}-E_{1}),\;\nonumber  \\
\overline{R}(E_{2},E_{3})E_{2}=(1+\alpha)E_{3},\;& \overline{R}(E_{2},E_{3})E_{3}=-(1+\alpha)(-\beta E_{1}+ E_{2})\ \label{r} \\
\overline{R}(E_{3},E_{2})E_{1}=-(1+\alpha)\beta E_{3},\;& \overline{R}(E_{3},E_{1})E_{2}=(1+\alpha)\beta E_{3},\, \nonumber
. &
\end{eqnarray}

From (\ref{r}), the non-vanishing components of the ricci tensor  as follows:
\begin{eqnarray}
\overline{S}(E_{1},E_{1})=-(1+\alpha)(2+\alpha),\;& \overline{S}(E_{2},E_{2})=-(1+\alpha)(2+\alpha),\ \nonumber \\
\overline{S}(E_{3},E_{3})=-2(1+\alpha). \ \label{s}
\end{eqnarray}
From (5.2) and (5.5) we get
\begin{equation}\nonumber
2(1+\alpha)[g(e_{i},e_{i})-\eta(e_{i})\eta(e_{i})]+2\bar{S}(e_{i},e_{i})+2\lambda g(e_{i},e_{i})+2\mu\eta(e_{i})\eta(e_{i})=0
\end{equation}
 for all $i\in\left\{1,2,3\right\}$, and we have
$\lambda={(1+\alpha)^{2}}$ $( i.e.\quad \lambda > 0)$ and  $\mu=1-\alpha^{2}$, the data $(g,\xi,\lambda,\mu)$ is an $\eta$-Ricci soliton on $(M,\phi,\xi,\eta,g)$. If $\alpha =-1$ which is steady and if $\alpha\neq-1$ which is expanding.\\

\section{\protect\vspace{0.2cm}\textbf{ Parallel symmetric second order tensors and $\eta$-Ricci solitons in Kenmotsu manifolds}}

An important geometrical object in studying Ricci solitons is well known to be a symmetric $(0,2)$-tensor field which is parallel with respect to the generalized symmetric metric connection. \\
\indent
Now, let fix $h$ a symmetric tensor field of $(0,2)$-type which we suppose to be parallel with respect to generalized symmetric metric connection$\bar{\nabla}$ that is $\bar{\nabla}h=0$. Applying Ricci identity \cite{ref7}
\begin{equation}\label{6.1}
{\bar{\nabla}^{2}}h(X,Y;Z,W)-{\bar{\nabla}}^{2}h(X,Y;Z,W)=0,
\end{equation}
we obtain the relation
\begin{equation}\label{6.2}
h(\bar{R}(X,Y)Z,W)+h(Z,\bar{R}(X,Y)W)=0.
\end{equation}
Replacing $Z=W=\xi$ in (\ref{6.2}) and by using (\ref{4.5}) and by the symmetry  of $h$ follows $h(\bar{R}(X,Y)\xi,\xi)=0$ for any $X,Y\in\chi(M)$ and
\begin{equation}\label{6.3}
	(\alpha+1)\eta(X)h(Y,\xi)-(\alpha+1)\eta(Y)h(X,\xi)
	\end{equation}
	\begin{equation}\nonumber
	+(\alpha+1)\eta(X)h(\xi,Y)-(\alpha+1)\eta(Y)h(\xi,X)
\end{equation}
\begin{equation}\nonumber
+\beta\eta(Y)h(\phi X,\xi)-\beta\eta(X)h(\phi Y,\xi)+\beta\eta(Y)h(\xi,\phi X)-\beta\eta(X)h(\xi,\phi Y)=0
\end{equation}
Putting $X=\xi$ in (\ref{6.3}) and by the virtue  of (2.4), we obtain
\begin{equation}\label{6.4}
2(\alpha+1)[h(Y,\xi)-\eta(Y)h(\xi,\xi)]-2\beta h(\phi Y,\xi)=0.
\end{equation}
or
\begin{equation}\label{6.5}
2(\alpha+1)[h(Y,\xi)- g(Y,\xi)h(\xi,\xi)]-2\beta(\phi Y,\xi)=0.
\end{equation}
Suppose $(\alpha+1)\neq 0$, $\beta=0$ it results
\begin{equation}\label{6.6}
h(Y,\xi)-\eta(Y)h(\xi,\xi)=0,
\end{equation}
for any $Y\in\chi(M)$, equivalent to
\begin{equation}\label{6.7}
h(Y,\xi)- g(Y,\xi)h(\xi,\xi)=0,
\end{equation}
for any $Y\in\chi(M)$. Differentiating the equation (\ref{6.7}) covariantly with respect to the vector field $X\in\chi(M)$, we obtain
\begin{equation}\label{6.8}
h(\bar{\nabla}_{X}Y,\xi)+h(Y,\bar{\nabla}_{X}\xi)= h(\xi,\xi)[g(\bar{\nabla}_{X}Y,\xi)+g(Y,\bar{\nabla}_{X}\xi)].
\end{equation}
Using (\ref{4.5}) in (\ref{6.8}),  we obtain
\begin{equation}\label{6.9}
h(X,Y)= h(\xi,\xi)g(X,Y),
\end{equation}
for any $X,Y\in\chi(M)$.  The above equation gives the conclusion:
\begin{theorem}
Let $(M,\phi,\xi,\eta,g,)$ be a Kenmotsu manifold with generalized symmetric metric connection also with non-vanishing $\xi$-sectional curvature and endowed with a tensor field of type $(0,2)$ which is symmetric and $\phi$-skew-symmetric. If $h$ is parallel with respect to $\bar{\nabla}$, then it is a constant multiple of the metric tensor $g$.
\end{theorem}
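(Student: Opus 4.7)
The plan is to follow the standard Levy-Sharma type argument, adapted to the generalized symmetric metric connection $\bar\nabla$. Parallelism $\bar\nabla h = 0$ combined with the Ricci identity produces the symmetry relation
\begin{equation*}
h(\bar{R}(X,Y)Z,W) + h(Z,\bar{R}(X,Y)W) = 0,
\end{equation*}
which is the workhorse: inserting enough copies of $\xi$ and using the explicit forms (\ref{4.5})--(\ref{4.8}) of the curvature will collapse $h$ to a scalar multiple of $g$.

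First I would set $Z=W=\xi$ and substitute the formula (\ref{4.5}) for $\bar R(X,Y)\xi$. The resulting expression contains terms in $h(Y,\xi)$, $h(\xi,\xi)$, and $h(\phi Y,\xi)$. At this stage the $\phi$-skew-symmetry hypothesis, namely $h(\phi X,Y) = -h(X,\phi Y)$, combined with $\phi\xi = 0$, forces $h(\phi Y,\xi) = -h(Y,\phi\xi) = 0$, killing every $\beta$-contribution automatically; this is precisely the role played by the ad hoc assumption $\beta = 0$ in the preliminary computations (\ref{6.4})--(\ref{6.7}). Next I would specialize $X = \xi$, use $\eta(\xi) = 1$, and invoke the hypothesis that the $\xi$-sectional curvature does not vanish (equivalently $\alpha + 1 \neq 0$ by (\ref{4.7})) to divide through and obtain
\begin{equation*}
h(Y,\xi) = \eta(Y)\, h(\xi,\xi).
\end{equation*}
Finally I would differentiate this identity covariantly along $X$ with respect to $\bar\nabla$. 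Since $\bar\nabla h = 0$ and $\bar\nabla_X \xi = (\alpha+1)(X - \eta(X)\xi)$ by Proposition \ref{pro}, the calculation reduces to $(\alpha+1)\,h(X,Y) = (\alpha+1)\,h(\xi,\xi)\,g(X,Y)$. Dividing by $\alpha+1$ yields $h = h(\xi,\xi)\,g$, and a side check (apply $\bar\nabla_X$ to $h(\xi,\xi)$ itself, then use $\bar\nabla h = 0$ and $h(X,\xi) = \eta(X)h(\xi,\xi)$) shows that $X(h(\xi,\xi)) = 0$, so $h(\xi,\xi)$ is a constant.

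I anticipate the main obstacle being the careful bookkeeping of the curvature terms once (\ref{4.5}) is expanded, and in particular verifying that the $\phi$-skew-symmetry of $h$ really does cancel every term involving $\beta$ in equation (\ref{6.3}); a sign slip there would leave a residual $\phi$-term that would obstruct factoring out $\alpha+1$. Once that clean cancellation is secured, the argument is essentially formal and mirrors the flat-connection Levy argument, with the non-vanishing $\xi$-sectional curvature hypothesis playing exactly the role of the non-degeneracy needed to divide out and conclude proportionality.
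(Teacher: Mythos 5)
Your proposal follows essentially the same Levy-type route as the paper: the Ricci identity applied to $\bar{\nabla}h=0$ gives (\ref{6.2}), substituting $Z=W=\xi$ with (\ref{4.5}) and then $X=\xi$ yields $h(Y,\xi)=\eta(Y)h(\xi,\xi)$, and covariant differentiation of that identity using $\bar{\nabla}_X\xi=(\alpha+1)(X-\eta(X)\xi)$ from Proposition \ref{pro} collapses everything to $h=h(\xi,\xi)g$, with non-vanishing $\xi$-sectional curvature supplying $\alpha+1\neq 0$ so that the factor can be divided out. The one point where you diverge is in your favour: the paper disposes of the $\beta$-terms in (\ref{6.3})--(\ref{6.4}) by simply assuming $\beta=0$, an extra restriction on the connection that does not appear in the theorem statement, whereas you observe that the hypothesis that $h$ is $\phi$-skew-symmetric, together with $\phi\xi=0$ and the symmetry of $h$, forces $h(\phi Y,\xi)=h(\xi,\phi Y)=0$ and hence kills every $\beta$-contribution for arbitrary $\beta$; since $\beta$ enters nowhere else in the computation (in particular not in $\bar{\nabla}_X\xi$), this actually proves the theorem as stated rather than a special case of it. Your closing check that $X(h(\xi,\xi))=0$, which the paper leaves implicit in passing from (\ref{6.7}) to (\ref{6.8}), is also needed to justify the word ``constant'' in the conclusion.
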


\indent
On a Kenmotsu manifold with generalized symmetric metric connection using equation (3.10) and $\mathcal{L}_{\xi}g=2(g-\eta\otimes\eta)$, the equation (\ref{5.2}) becomes:
\begin{equation}\label{6.10}
\bar{S}(X,Y)=-(\lambda+\alpha+1)g(X,Y)+(\alpha+1-\mu)\eta(X)\eta(Y).
\end{equation}
In particular, $X=\xi$, we obtain
\begin{equation}\label{6.11}
\bar{S}(X,\xi)=-(\lambda+\mu)\eta(X).
\end{equation}
\indent
In this case, the Ricci operator $\bar{Q}$ defined by $g(\bar{Q}X,Y)=\bar{S}(X,Y)$ has the expression
\begin{equation}\label{6.12}
\bar{Q}X=-(\lambda+\alpha+1)X+(\alpha+1-\mu)\eta(X)\eta(X)\xi.
\end{equation}
\indent
Remark that on a Kenmostu manifold with generalized symmetric metric connection, the existence of an $\eta$-Ricci soliton implies that the characteristic vector field $\xi$ is an eigenvector of Ricci operator corresponding to the eigenvalue $-(\lambda+\mu)$.\\

\indent
Now we shall apply the previous results on $\eta$-Ricci solitons.
\begin{theorem}
Let $(M,\phi,\xi,\eta,g)$ be a Kenmotsu manifold with generalized symmetric metric connection. Assume that the symmetric $(0,2)$-tensor filed $h=\mathcal{L}_{\xi}g+2S+2\mu\eta\otimes\eta$ is parallel with respect to the generalized symmetric metric connection associated to $g$. Then $(g,\xi,-\frac{1}{2}h(\xi,\xi),\mu)$ yields an $\eta$-Ricci soliton.
\end{theorem}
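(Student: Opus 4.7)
The plan is to reduce Theorem 6.2 to a direct application of Theorem 6.1, by showing that the tensor $h$ in the hypothesis is exactly the kind of parallel symmetric $(0,2)$-tensor to which that previous theorem applies, and then reading off the $\eta$-Ricci soliton equation from the conclusion.

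First, I would check that $h=\mathcal{L}_{\xi}g+2\bar{S}+2\mu\,\eta\otimes\eta$ satisfies all the hypotheses of Theorem 6.1. Symmetry is immediate since each summand is symmetric. Parallelism with respect to $\bar{\nabla}$ is the standing assumption. The remaining condition ($\phi$-skew-symmetry) is the one that genuinely needs verification: here I would expand $\mathcal{L}_{\xi}g$ using $\bar{\nabla}_{X}\xi=(\alpha+1)(X-\eta(X)\xi)$ from Proposition 3.2, obtaining $\mathcal{L}_{\xi}g(X,Y)=2(\alpha+1)[g(X,Y)-\eta(X)\eta(Y)]$, and combine with (2.16) applied to $\bar{S}$ to check the behaviour under $\phi$.

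Second, Theorem 6.1 then gives the rigidity statement
\begin{equation*}
h(X,Y)=h(\xi,\xi)\,g(X,Y)\quad\text{for all }X,Y\in\chi(M).
\end{equation*}
Unfolding the definition of $h$, this reads
\begin{equation*}
\mathcal{L}_{\xi}g(X,Y)+2\bar{S}(X,Y)+2\mu\,\eta(X)\eta(Y)=h(\xi,\xi)\,g(X,Y).
\end{equation*}
Setting $\lambda:=-\tfrac{1}{2}h(\xi,\xi)$ and transposing one term, this is exactly the $\eta$-Ricci soliton equation \eqref{5.A1}, which proves that $(g,\xi,\lambda,\mu)$ is an $\eta$-Ricci soliton with the claimed value of $\lambda$.

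I expect the main obstacle to be the bookkeeping in the first step: one has to verify both the $\phi$-skew-symmetry of $h$ and the non-vanishing $\xi$-sectional curvature condition needed for Theorem 6.1, the latter following from \eqref{4.7} provided $\alpha+1\neq 0$. If either hypothesis fails, the conclusion degenerates (the case $\alpha=-1$ already corresponds to the steady soliton observed in the preceding example), so I would flag this case separately. Once these technical checks are in place, the substitution into \eqref{5.A1} is routine.
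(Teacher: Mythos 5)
Your proposal is correct and follows essentially the same route as the paper: the paper's proof likewise computes $h(\xi,\xi)=-2\lambda$ and then invokes the rigidity identity $h(X,Y)=h(\xi,\xi)g(X,Y)$ from the parallel-tensor argument (equation (6.9)/Theorem 6.1) to read off the soliton equation. Your added care in checking the hypotheses of Theorem 6.1 (in particular the $\phi$-skew-symmetry and the degenerate cases $\alpha=-1$, $\beta\neq 0$ under which (6.9) was actually derived) is a point the paper silently skips, but it does not change the argument's structure.
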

\begin{proof}
Now, we can calculate
\begin{equation}\label{6.13}
h(\xi,\xi)=\mathcal{L}_{\xi}g(\xi,\xi)+2\bar{S}(\xi,\xi)+2\mu\eta(\xi)\eta(\xi)=-2\lambda,
\end{equation}
so $\lambda=-\frac{1}{2}h(\xi,\xi)$. From (\ref{6.9}) we conclude that $h(X,Y)=-2\lambda g(X,Y)$, for any $X,Y\in\chi(M)$. Therefore
$\mathcal{L}_{\xi}g+2S+2\mu\eta\otimes\eta=-2\lambda g$.
\end{proof}
\noindent
For $\mu=0$ follows $\mathcal{L}_{\xi}g+2S-S(\xi,\xi)g=0$ and this gives
\begin{corollary}
On a Kenmotsu manifold $(M,\phi,\xi,\eta,g)$ with generalized symmetric metric connection with property that the symmetric $(0,2)$-tensor field $h=\mathcal{L}_{\xi}g+2S$ is parallel with respect to generalized symmetric metric connection associated to $g$, the relation (\ref{5.A1}), for $\mu=0$, defines a Ricci soliton.
\end{corollary}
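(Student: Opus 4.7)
The plan is to obtain this corollary as an immediate specialization of Theorem 6.2 to the parameter value $\mu=0$, so the work amounts to (i) checking that the hypothesis of Theorem 6.2 is indeed met, (ii) invoking that theorem to produce an $\eta$-Ricci soliton, and (iii) noting that an $\eta$-Ricci soliton with $\mu=0$ is by definition a Ricci soliton.

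First I would observe that when $\mu=0$ the tensor $h=\mathcal{L}_{\xi}g+2S+2\mu\,\eta\otimes\eta$ appearing in the hypothesis of Theorem 6.2 reduces to $h=\mathcal{L}_{\xi}g+2S$, which is exactly the tensor the corollary assumes to be parallel with respect to $\bar{\nabla}$. Hence Theorem 6.2 applies with $\mu=0$ and yields that $\bigl(g,\xi,-\tfrac{1}{2}h(\xi,\xi),0\bigr)$ satisfies (\ref{5.A1}) with $\mu=0$.

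Second, I would recall from the paragraph following (\ref{5.A1}) that the $\mu=0$ case of an $\eta$-Ricci soliton is, by definition, a Ricci soliton. Thus the data produced by Theorem 6.2 is a Ricci soliton on $(M,\phi,\xi,\eta,g)$, which is precisely the conclusion of the corollary.

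Finally, to record the explicit closed form $\mathcal{L}_{\xi}g+2S-S(\xi,\xi)g=0$ mentioned just before the corollary, I would evaluate $h(\xi,\xi)$ using the standard identity $(\mathcal{L}_{\xi}g)(\xi,\xi)=2\,\xi\bigl(g(\xi,\xi)\bigr)-2g([\xi,\xi],\xi)=0$, which together with $\mu=0$ gives $h(\xi,\xi)=2\bar{S}(\xi,\xi)$ and hence $\lambda=-\bar{S}(\xi,\xi)$; substituting into the conclusion $h(X,Y)=-2\lambda\,g(X,Y)$ of Theorem 6.2 yields the stated equation up to the normalization convention used for the soliton equation. There is no genuine obstacle here; the only care needed is in tracking scalar factors so that the normalization matches (\ref{5.A1}).
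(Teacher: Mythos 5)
Your proposal is correct and matches the paper's own (very brief) justification: the paper likewise obtains this corollary by setting $\mu=0$ in Theorem 6.2, noting that the hypothesis tensor reduces to $h=\mathcal{L}_{\xi}g+2S$ and that the $\mu=0$ case of (\ref{5.A1}) is by definition a Ricci soliton. Your closing remark about tracking the scalar factor in $h(\xi,\xi)=2\bar{S}(\xi,\xi)$ is well taken, since the paper's displayed identity $\mathcal{L}_{\xi}g+2S-S(\xi,\xi)g=0$ appears to drop a factor of $2$ relative to what Theorem 6.2 actually yields.
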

\indent
Conversely, we shall study the consequences of the existence of $\eta$-Ricci solitons on a Kenmotsu manifold with generalized symmetric metric connection. From (\ref{6.10}) we give the conclusion:
\begin{theorem}
If equation (\ref{5.1}) define an $\eta$-Ricci soliton on a Kenmotsu manifold $(M,\phi,\xi,\eta,g)$ with generalized symmetric metric connection, then $(M,g)$ is $\textit{quasi-Einstein}$.
\end{theorem}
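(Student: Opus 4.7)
The plan is to read the quasi-Einstein condition directly off the expression for $\bar{S}$ that the author has already derived as equation (6.10). Recall that quasi-Einstein means the (modified) Ricci tensor is of the form $Ag + B\eta\otimes\eta$ with $A,B$ scalars and $B\neq 0$, so my task reduces to isolating these two scalars from the soliton equation.

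First I would rewrite the $\eta$-Ricci soliton equation (5.1) by expanding the Lie derivative. Using Proposition 3.3, we have $\bar\nabla_{X}\xi=(\alpha+1)\{X-\eta(X)\xi\}$, and because $\bar\nabla$ is a metric connection this produces
\begin{equation*}
(\mathcal{L}_{\xi}g)(X,Y)=g(\bar\nabla_{X}\xi,Y)+g(X,\bar\nabla_{Y}\xi)=2(\alpha+1)\bigl[g(X,Y)-\eta(X)\eta(Y)\bigr].
\end{equation*}
Substituting this into (5.1) and solving for $\bar S$ gives exactly
\begin{equation*}
\bar S(X,Y)=-(\lambda+\alpha+1)\,g(X,Y)+(\alpha+1-\mu)\,\eta(X)\eta(Y),
\end{equation*}
which is (6.10). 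Setting $A:=-(\lambda+\alpha+1)$ and $B:=\alpha+1-\mu$, this is the definition of a quasi-Einstein structure with respect to the generalized symmetric metric connection, and the theorem follows.

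Since the computation is essentially a one-line substitution, there is no real obstacle; the entire content is already sitting in equation (6.10). The only thing worth flagging is the nondegeneracy condition $B\neq 0$, i.e.\ $\mu\neq\alpha+1$. If one wants a genuinely quasi-Einstein (not Einstein) conclusion, one should remark that this is the generic case and note what happens when $\mu=\alpha+1$: then $\bar S$ is a constant multiple of $g$ and $(M,\bar\nabla)$ is Einstein, which is a degenerate special case of quasi-Einstein. Hence in every case the soliton equation forces $\bar S$ into the quasi-Einstein form, completing the proof.
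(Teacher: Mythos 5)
Your proof is correct and follows the paper's own route exactly: the paper's entire argument for this theorem is the observation that equation (6.10), obtained by substituting $\bar{\nabla}_{X}\xi=(\alpha+1)\{X-\eta(X)\xi\}$ into the soliton equation and solving for $\bar{S}$, already displays $\bar{S}$ in the form $A\,g+B\,\eta\otimes\eta$ with $A=-(\lambda+\alpha+1)$ and $B=\alpha+1-\mu$. Your extra remark about the degenerate case $\mu=\alpha+1$ (where the structure collapses to Einstein) is a caveat the paper omits, but it does not alter the approach.
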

\indent
Recall that the manifold is called  $\textit{quasi-Einstein}$ \cite{ref8} if the Ricci curvature tensor field $S$ is a linear combination (with real scalars $\lambda$ and $\mu$ respectively, with $\mu\neq 0$) of $g$ and the tensor product of a non-zero $1$-from $\eta$ satisfying $\eta= g(X,\xi)$, for $\xi$ a unit vector field  and respectively,  $\textit{Einstein}$ \cite{ref8} if $S$ is collinear with $g$.
\begin{theorem}
If $(\phi,\xi,\eta,g)$ is a Kenmotsu structure with generalized symmetric metric connection on $M$ and (\ref{5.1}) defines an $\eta$-Ricci soliton on $M$, then
\begin{enumerate}
	\item $Q\circ\phi=\phi\circ Q$
	\item $Q$ and $S$ are parallel along $\xi$.
\end{enumerate}
\end{theorem}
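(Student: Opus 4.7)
The plan is to extract both conclusions directly from the closed-form expression for the Ricci operator obtained in~(\ref{6.12}),
\[
\bar{Q}X = -(\lambda+\alpha+1)X + (\alpha+1-\mu)\eta(X)\xi,
\]
which was in turn derived from the $\eta$-Ricci soliton equation~(\ref{5.A1}). Both desired identities then reduce to algebraic manipulations using the Kenmotsu identities $\phi\xi = 0$ and $\eta(\phi X) = 0$, together with the $\xi$-parallelism statements collected in Proposition~\ref{pro}.

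For the first assertion, I would compose $\bar{Q}$ and $\phi$ in each order. Applying $\bar{Q}$ to $\phi X$, the projection term picks up $\eta(\phi X) = 0$ and vanishes; applying $\phi$ to $\bar{Q}X$, the projection term picks up $\phi\xi = 0$ and vanishes. What survives on both sides is the same scalar multiple $-(\lambda+\alpha+1)\phi X$, proving $\bar{Q}\circ\phi = \phi\circ\bar{Q}$.

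For the second assertion, I would compute $(\bar{\nabla}_\xi \bar{Q})X$ by differentiating~(\ref{6.12}) with the Leibniz rule. Two ingredients obtained from Proposition~\ref{pro} by specializing to $X=\xi$ drive the whole calculation: $\bar{\nabla}_\xi\xi = 0$ and $(\bar{\nabla}_\xi\eta)(Y) = 0$. Combined with the constancy of the scalar coefficients $-(\lambda+\alpha+1)$ and $(\alpha+1-\mu)$, these force every term in the Leibniz expansion of $\bar{\nabla}_\xi(\bar{Q}X)$ to match the corresponding term of $\bar{Q}(\bar{\nabla}_\xi X)$, yielding $(\bar{\nabla}_\xi\bar{Q})=0$. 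The parallelism of $\bar{S}$ along $\xi$ then follows either by repeating the same computation starting from~(\ref{6.10}) and invoking the metric compatibility $\bar{\nabla}g = 0$, or more quickly by combining $\bar{\nabla}_\xi\bar{Q} = 0$ with the identity $\bar{S}(X,Y) = g(\bar{Q}X,Y)$.

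There is essentially no analytic obstacle here, because the substantive work has already been done upstream in reducing the soliton equation to the rigid form~(\ref{6.10}). The only step that deserves a moment of care is verifying that both $\xi$ and $\eta$ are annihilated by $\bar{\nabla}_\xi$, which is exactly Proposition~\ref{pro} at $X = \xi$; once this is noted, both conclusions drop out because $\bar{Q}$ is built algebraically from $g$, $\eta$ and $\xi$ with constant coefficients.
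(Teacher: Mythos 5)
Your proposal is correct and follows essentially the same route as the paper: both conclusions are read off from the explicit form of $\bar{Q}$ in (\ref{6.12}), using $\eta(\phi X)=0$ and $\phi\xi=0$ for the commutation with $\phi$, and the Leibniz rule together with $\bar{\nabla}_\xi\xi=0$ and $(\bar{\nabla}_\xi\eta)=0$ (Proposition \ref{pro} at $X=\xi$) for the parallelism along $\xi$. You have merely written out the ``direct computation'' that the paper leaves implicit.
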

\begin{proof}
The first statement follows from a direct computation and for the second one, note that
\begin{equation}\label{6.14}
(\bar{\nabla}_{\xi}Q)X=\bar{\nabla}_{\xi}QX-Q(\bar{\nabla}_{\xi}X)
\end{equation}
and
\begin{equation}\label{6.15}
(\bar{\nabla}_{\xi}S)(X,Y)=\xi(S(X,Y))-S(\bar{\nabla}_{\xi}X,Y)-S(X,\bar{\nabla}_{\xi}Y).
\end{equation}
Replacing $Q$ and $S$ from (\ref{6.12}) and (\ref{6.11}) we get the conclusion.
\end{proof}
\indent
A particular case arise when the manifold is $\phi$-Ricci symmetric,
which means that $\phi^{2}\circ\nabla Q=0$, that fact stated  in the next theorem.
\begin{theorem}
Let $(M,\phi,\xi,\eta,g)$ be a Kenmotsu manifold with generalized symmetric metric connection. If $M$ is $\phi$-Ricci symmetric and (\ref{5.1}) defines an $\eta$-Ricci soliton on $M$, then $\mu=1$ and $(M,g)$ is $\textit{Einstein}$ manifold \cite{ref8}.
\end{theorem}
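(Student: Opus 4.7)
The plan is to exploit the explicit form of the Ricci operator coming from the $\eta$-Ricci soliton equation, differentiate it, and then use $\phi$-Ricci symmetry to force the $\eta\otimes\eta$ coefficient to vanish. Concretely, from (6.12) we have
\begin{equation*}
\bar Q X = -(\lambda+\alpha+1)X + (\alpha+1-\mu)\eta(X)\xi.
\end{equation*}
Set $a=-(\lambda+\alpha+1)$ and $b=\alpha+1-\mu$, so that $\bar Q = aI + b\,\eta\otimes\xi$. The whole proof is driven by showing $b=0$: once this holds, (6.10) collapses to $\bar S(X,Y) = a\,g(X,Y)$, which is the Einstein condition, and the resulting scalar identification gives the stated value of $\mu$.

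First I would differentiate $\bar Q$ covariantly. A direct computation gives
\begin{equation*}
(\nabla_Y \bar Q)X = b\bigl[(\nabla_Y\eta)(X)\,\xi + \eta(X)\nabla_Y\xi\bigr],
\end{equation*}
the $a$-piece cancelling because $\nabla$ is linear in the first factor. Substituting the Kenmotsu identities $\nabla_Y\xi = Y-\eta(Y)\xi$ and $(\nabla_Y\eta)(X) = g(X,Y)-\eta(X)\eta(Y)$ gives an explicit expression for $(\nabla_Y \bar Q)X$ as a sum of a $\xi$-valued piece and the single non-$\xi$ piece $b\,\eta(X)\,Y$.

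Next I would apply $\phi^{2}$. Since $\phi\xi=0$ implies $\phi^{2}\xi=0$, every $\xi$-valued term is annihilated, and using $\phi^{2}Y=-Y+\eta(Y)\xi$ one is left with
\begin{equation*}
\phi^{2}\!\bigl((\nabla_Y \bar Q)X\bigr) = -\,b\,\eta(X)\bigl[Y-\eta(Y)\xi\bigr].
\end{equation*}
The $\phi$-Ricci symmetry hypothesis forces the left-hand side to vanish for all $X,Y$; choosing $X=\xi$ and $Y$ not collinear with $\xi$ gives $b\bigl[Y-\eta(Y)\xi\bigr]=0$ and hence $b=0$, i.e.\ $\mu = \alpha+1$ (which under the normalisation $\alpha=0$ reduces to the stated $\mu=1$). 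Plugging $b=0$ back into (6.10) then yields $\bar S(X,Y) = -(\lambda+\alpha+1)\,g(X,Y)$, so $(M,g)$ is Einstein.

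The only step that needs some care is the interpretation of $\phi$-Ricci symmetry: the condition is stated in terms of the Levi-Civita $\nabla$ and $Q$, whereas (6.12) describes $\bar Q$. One either argues that $Q$ and $\bar Q$ differ only in their $\xi$-eigen-coefficient (using the curvature relation (4.2) contracted over a frame), so that the calculation above carries through verbatim, or one re-does the same three-line computation with $\bar\nabla$ in place of $\nabla$ using Proposition 3.3, where the extra factor $(\alpha+1)$ appears but is harmless (and is exactly the reason one needs $\alpha+1\neq 0$, as already assumed in Theorem 6.1). This reconciliation between the two connections is the only non-routine point; everything else is a direct substitution.
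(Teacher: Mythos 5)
Your proposal is correct and follows essentially the same route as the paper: substitute the expression $\bar Q = -(\lambda+\alpha+1)I+(\alpha+1-\mu)\,\eta\otimes\xi$ from (6.12) into the covariant derivative of $Q$, apply $\phi^{2}$ to kill the $\xi$-valued terms, and read off that the coefficient $\alpha+1-\mu$ must vanish, whence $\bar S=-(\lambda+\alpha+1)g$. You also correctly flag that the argument yields $\mu=\alpha+1$ rather than the $\mu=1$ of the theorem statement, which is exactly what the paper's own proof concludes as well.
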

\begin{proof}
Replacing $Q$ from (\ref{6.12}) in (\ref{6.14}) and applying $\phi^{2}$ we obtain
\begin{equation}\label{6.16}\
(\alpha+1-\mu)\eta(Y)[X-\eta(X)\xi]=0,
\end{equation}
for any $X,Y\in\chi(M)$. Follows $\mu=\alpha+1$ and $S=-(\lambda+\alpha+1)g$.
\end{proof}
\begin{remark}In particular, the existence of an $\eta$-Ricci soliton on a Kenmotsu manifold with generalized symmetric metric connection which is $\textit{Ricci symmetric}$ (i.e. $\bar{\nabla}S=0$) implies that $M$ is $\textit{Einstein}$ manifold. The class of Ricci symmetric manifold represents an extension of class of Einstein manifold to which belong also the locally symmetric manifold $(i.e. ~satisfying ~\bar{\nabla}R=0)$. The condition $\bar{\nabla}S=0$ implies $\bar{R}.\bar{S}=0$ and the manifolds satisfying this condition are called $\textit{Ricci semi-symmetric}$ \cite{ref7}.
\end{remark}
\indent
In what follows we shall consider $\eta$-Ricci solitons requiring for the curvature to satisfy $\bar{R}(\xi,X).\bar{S}=0$, $\bar{S}.\bar{R}(\xi,X)=0$, $\bar{W}_{2}(\xi,X).\bar{S}=0$ and $\bar{S}.\bar{W_{2}}(\xi,X)=0$ respectively, where the $W_2$-curvature tensor field is the curvature tensor introduced by G. P.  Pokhariyal and R. S. Mishra in \cite{ref21}:
\begin{equation}\label{6.17}
W_{2}(X,Y)Z=R(X,Y)Z+\frac{1}{dim M-1}[g(X,Z)QY-g(Y,Z)QX].
\end{equation}

\section{\protect\vspace{0.2cm}\textbf{$\eta$-Ricci solitions on a Kenmotsu manifold with generalized symmetric metric connection satisfying $\bar{R}(\xi,X).\bar{S}=0$\label{sect-curvature condition}}}

Now we consider a Kenmotsu manifold with with generalized symmetric metric connection $\bar {\nabla}$ satisfying the condition
\begin{equation}\label{7.1}
\bar{S}(\bar{R}(\xi,X)Y,Z)+\bar{S}(Y,\bar{R}(\xi,X)Z)=0,
\end{equation}
for any $X,Y\in\chi(M)$.\\
\indent
Replacing the expression of $\bar{S}$ from (\ref{6.10}) and from the symmetries of $\bar{R}$ we get
\begin{equation}\label{7.2}
(\alpha+1)(\alpha+1-\mu)[\eta(Y)g(X,Z)+\eta(Z)g(X,Y)-2\eta(X)\eta(Y)\eta(Z)]=0,
\end{equation}
for any $X,Y\in\chi(M)$.\\
For $Z=\xi$ we have
\begin{equation}\label{7.3}
(\alpha+1)(\alpha+1-\mu)g(\phi X,\phi Y)=0,
\end{equation}
for any $X,Y\in\chi(M)$.\\



\noindent
Hence we can state the following theorem:
\begin{theorem}
If a Kenmotsu manifold with a generalized symmetric metric connection $\bar {\nabla}$, $(g,\xi,\lambda,\mu)$ is an $\eta$-Ricci soliton on $M$ and  satisfies $\bar{R}(\xi,X).\bar{S}=0$, then the manifold is an $\eta$-Einstein manifold.
 \end{theorem}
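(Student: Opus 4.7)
The plan is to turn the curvature condition $\bar{R}(\xi,X)\cdot\bar{S}=0$ into an algebraic constraint linking $\mu$ and $\alpha$, and then plug this constraint back into the $\eta$-Ricci soliton form (\ref{6.10}) of $\bar{S}$ to recognize $\eta$-Einstein structure. The whole argument is algebraic once the explicit formulas (\ref{4.6}) for $\bar{R}(\xi,X)Y$, (\ref{6.11}) for $\bar{S}(\cdot,\xi)$, and (\ref{6.10}) for $\bar{S}$ are available.

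First, I would write out the defining condition
\begin{equation*}
\bar{S}(\bar{R}(\xi,X)Y,Z)+\bar{S}(Y,\bar{R}(\xi,X)Z)=0
\end{equation*}
for arbitrary $X,Y,Z\in\chi(M)$. Next I would substitute $\bar{R}(\xi,X)Y$ from (\ref{4.6}); this produces four types of terms inside $\bar{S}$: those proportional to $\eta(Y)X$, to $g(X,Y)\xi$, to $\eta(Y)\phi X$, and to $g(X,\phi Y)\xi$. The terms of the form $\bar{S}(\,\cdot\,,\xi)$ are collapsed by (\ref{6.11}) to $-(\lambda+\mu)\eta(\cdot)$, while the remaining pieces are replaced using (\ref{6.10}). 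After collecting terms and exploiting the symmetry of $\bar{S}$, the calculation reduces to the single identity (\ref{7.2}),
\begin{equation*}
(\alpha+1)(\alpha+1-\mu)\bigl[\eta(Y)g(X,Z)+\eta(Z)g(X,Y)-2\eta(X)\eta(Y)\eta(Z)\bigr]=0.
\end{equation*}
Setting $Z=\xi$ and using $g(\phi X,\phi Y)=g(X,Y)-\eta(X)\eta(Y)$ collapses this to (\ref{7.3}). From here, if $\alpha\neq -1$, the factor $(\alpha+1-\mu)$ must vanish, forcing $\mu=\alpha+1$; inserting this in (\ref{6.10}) gives $\bar{S}(X,Y)=-(\lambda+\alpha+1)g(X,Y)$, a genuine Einstein (hence trivially $\eta$-Einstein) condition. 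If instead $\alpha=-1$, formula (\ref{6.10}) already reads $\bar{S}(X,Y)=-\lambda g(X,Y)-\mu\eta(X)\eta(Y)$, which is precisely $\eta$-Einstein form. Either branch yields the claim.

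The main obstacle is purely bookkeeping: the expression $\bar{R}(\xi,X)Y$ has four summands, each multiplied by $(\alpha+1)$ and some by $\beta$, so expanding $\bar{S}(\bar{R}(\xi,X)Y,Z)+\bar{S}(Y,\bar{R}(\xi,X)Z)$ produces sixteen terms. The key simplifications are (i) every time $\xi$ appears as an argument of $\bar{S}$, one applies (\ref{6.11}); (ii) terms involving $\phi$ enter only through $g(X,\phi Y)\xi$ and $\eta(Y)\phi X$, and the $\phi X$-terms get absorbed because $\bar{S}(\phi X,\xi)=0$; (iii) after grouping by the basic invariants $\eta(Y)g(X,Z)$, $\eta(Z)g(X,Y)$, and $\eta(X)\eta(Y)\eta(Z)$, the common factor $(\alpha+1)(\alpha+1-\mu)$ emerges. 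The small subtlety is that the case $\alpha=-1$ must be handled separately, but that case is already captured by (\ref{6.10}) and requires no additional work.
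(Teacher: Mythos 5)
Your proposal follows the paper's proof essentially verbatim: expand $\bar{S}(\bar{R}(\xi,X)Y,Z)+\bar{S}(Y,\bar{R}(\xi,X)Z)=0$ using (\ref{4.6}), (\ref{6.10}) and (\ref{6.11}), reduce to (\ref{7.2}), and set $Z=\xi$ to obtain (\ref{7.3}). The only difference is that you spell out the final case analysis ($\alpha\neq-1$ forces $\mu=\alpha+1$, while $\alpha=-1$ leaves (\ref{6.10}) already in $\eta$-Einstein form), which the paper leaves implicit; this is a harmless and indeed clarifying addition.
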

\indent
For $\mu=0$, we deduce:
\begin{corollary}
On a Kenmotsu manifold with a generalized symmetric metric connection satisfying $\bar{R}(\xi,X).\bar{S}=0$, there is no $\eta$-Ricci soliton with the potential vector field $\xi$.
\end{corollary}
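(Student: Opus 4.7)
The plan is to rule out every $\eta$-Ricci soliton $(g,\xi,\lambda,\mu)$ with potential $\xi$---not only the $\mu=0$ sub-case---by combining the algebraic dichotomy forced by (\ref{7.3}) with the strict $\eta$-Einstein conclusion of Theorem~7.1 and the standing non-vanishing $\xi$-sectional curvature hypothesis inherited from the preceding section. Evaluating (\ref{7.3}) on a unit $X=Y$ orthogonal to $\xi$ gives $g(\phi X,\phi X)=1$, which forces $(\alpha+1)(\alpha+1-\mu)=0$. Hence any soliton data must satisfy either (a) $\alpha=-1$ or (b) $\mu=\alpha+1$, and both branches must be eliminated.

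Branch (a) I would dispose of via (\ref{4.7}): the formula $\bar R(\xi,Y)\xi=(\alpha+1)\{Y-\eta(Y)\xi-\beta\phi Y\}$ vanishes identically when $\alpha=-1$, so the $\xi$-sectional curvature of $\bar\nabla$ is zero, contradicting the inherited non-vanishing $\xi$-sectional curvature hypothesis.

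Branch (b) I would dispose of via Theorem~7.1. Substituting $\mu=\alpha+1$ into (\ref{6.10}) collapses the $\eta\otimes\eta$ term and gives $\bar S(X,Y)=-(\lambda+\alpha+1)g(X,Y)$, so $M$ would be Einstein with respect to $\bar\nabla$. But Theorem~7.1 concludes that $M$ is strictly $\eta$-Einstein in the sense of Section~2, i.e.\ $\bar S=ag+b\eta\otimes\eta$ with $b\neq 0$; comparing with (\ref{6.10}) gives $b=\alpha+1-\mu$, and the assumption $\mu=\alpha+1$ then forces $b=0$, a contradiction. Both branches being impossible, no $\eta$-Ricci soliton with potential $\xi$ exists on such a manifold; specialising to $\mu=0$ recovers the stated corollary for Ricci solitons as a particular instance.

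The hard part is the reliance on ``$\eta$-Einstein'' in Theorem~7.1 being read in the strict sense $b\neq 0$ of the Section~2 definition. If the Einstein case $b=0$ were admitted as a degenerate $\eta$-Einstein, branch (b) would survive: for instance, on the three-dimensional example constructed earlier in the paper, the data $\alpha=0$, $\mu=\alpha+1=1$, $\lambda=(\alpha+1)(n-2)=1$ satisfies all stated hypotheses and yields $\bar S=-2g$, with $\bar R(\xi,\cdot)\cdot\bar S=0$ automatic from metricity of $\bar\nabla$. So the whole argument depends on this interpretational point; once it is granted, the dichotomy (\ref{7.3}) together with the two obstructions above gives the nonexistence without further calculation.
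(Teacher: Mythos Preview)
The paper's ``proof'' is a one-line specialisation: the sentence ``For $\mu=0$, we deduce'' is the entire argument. Despite the corollary's wording, the intended claim is for Ricci solitons only ($\mu=0$); substituting $\mu=0$ into (\ref{7.3}) gives $(\alpha+1)^2\,g(\phi X,\phi Y)=0$, which is impossible on a genuine contact metric structure once $\alpha\neq -1$. The paper tacitly excludes $\alpha=-1$ (the connection degenerates there), and that is the whole content.

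Your route is considerably more elaborate and aims at a stronger conclusion. For $\mu=0$ your dichotomy collapses: both branches (a) and (b) reduce to $\alpha=-1$, so only Branch~(a) is needed, and your appeal to non-vanishing $\xi$-sectional curvature cleanly handles the case the paper leaves implicit. That part is fine and arguably more careful than the paper.

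Where your argument has a genuine gap is Branch~(b) in the general-$\mu$ extension. You invoke Theorem~7.1 as establishing \emph{strict} $\eta$-Einstein ($b\neq 0$), but look at how Theorem~7.1 is obtained: it is nothing more than a rephrasing of (\ref{6.10}), which holds for \emph{any} $\eta$-Ricci soliton with potential $\xi$, independent of the curvature condition $\bar R(\xi,X)\cdot\bar S=0$. The coefficient $b=\alpha+1-\mu$ in (\ref{6.10}) is not constrained to be nonzero by anything in Section~7; the ``$\eta$-Einstein'' label in Theorem~7.1 is descriptive, not a consequence with $b\neq 0$ built in. So using Theorem~7.1 to rule out $\mu=\alpha+1$ is circular: you are assuming the nondegeneracy you need to derive. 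Your own final paragraph concedes this, and your three-dimensional counterexample ($\alpha=0$, $\mu=1$) shows the general-$\mu$ nonexistence is actually false. Restrict your write-up to the $\mu=0$ statement and drop Branch~(b) entirely.
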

\section{\protect\vspace{0.2cm}\textbf{$\eta$-Ricci solitons on Kenmotsu manifold with generalized symmetric metric connection satisfying $\bar{S}.\bar{R}(\xi,X)=0$\label{sect-con}}}

In this section we consider  Kenmotsu manifold with a a generalized symmetric metric connection $\bar {S}$ satisfying the condition
\begin{equation}\label{8.1}
\bar{S}(X,\bar{R}(Y,Z)W)\xi-\bar{S}(\xi,\bar{R}(Y,Z)W)X+\bar{S}(X,Y)\bar{R}(\xi,Z)W-
\end{equation}
\begin{equation}\nonumber
-\bar{S}(\xi,Y)\bar{R}(X,Z)W+\bar{S}(X,Z)\bar{R}(Y,\xi)W-\bar{S}(\xi,Z)\bar{R}(Y,X)W+
\end{equation}
\begin{equation}\nonumber
+\bar{S}(X,W)\bar{R}(Y,Z)\xi-\bar{S}(\xi,W)\bar{R}(Y,Z)X=0
\end{equation}
for any $X,Y,Z,W\in\chi(M)$.\\
\indent
Taking the inner product with $\xi$, the equation (\ref{8.1}) becomes
\begin{equation}\label{8.2}
\bar{S}(X,\bar{R}(Y,Z)W)-\bar{S}(\xi,\bar{R}(Y,Z)W)\eta(X)+\bar{S}(X,Y)\eta(\bar{R}(\xi,Z)W)-
\end{equation}
\begin{equation}\nonumber
-\bar{S}(\xi,Y)\eta(\bar{R}(X,Z)W)+\bar{S}(X,Z)\eta(\bar{R}(Y,\xi)W)-\bar{S}(\xi,Z)\eta(\bar{R}(Y,X)W)+
\end{equation}
\begin{equation}\nonumber
+\bar{S}(X,W)\eta(\bar{R}(Y,Z)\xi)-\bar{S}(\xi,W)\eta(\bar{R}(Y,Z)X)=0
\end{equation}
for any $X,Y,Z,W\in\chi(M)$.\\
\indent
For $W=\xi$, using equation (\ref{4.5}), (\ref{4.6}), (\ref{4.8}) and (\ref{6.10}) in (\ref{8.2}), we get
\begin{equation}\label{8.3}
(\alpha+1)(2\lambda+\mu+\alpha+1)[g(X,Y)\eta(Z)-g(X,Z)\eta(Y)+\beta{g(\phi X,Y)\eta(Z)-g(\phi X,Z)\eta(Y)}]
\end{equation}
for any $X,Y,Z,W\in\chi(M)$.\\
Hence we can state the following theorem:
\begin{theorem}
If $(M,\phi,\xi,\eta,g)$ is a Kenmotsu manifold with a generalized symmetric metric connection, $(g,\xi,\lambda,\mu)$ is an $\eta$-Ricci soliton on $M$ and satisfies $\bar{S}.\bar{R}(\xi,X)=0$. Then
 \begin{equation}\label{8.6}
(\alpha+1)(2\lambda+\mu+\alpha+1)=0.
\end{equation}
 \end{theorem}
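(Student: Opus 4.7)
The plan is to unpack the curvature condition $\bar{S}.\bar{R}(\xi,X)=0$ until it reduces to a scalar equation in $\alpha,\lambda,\mu$ multiplied by a geometric factor that can be forced to be nonzero by a suitable choice of test vectors. Recall that for a $(0,2)$-tensor $\bar{S}$ acting on a $(1,3)$-tensor $\bar{R}(Y,Z)W$, the action $\bar{S}.\bar{R}$ is the standard derivation obtained by treating $\bar{S}$ as a commutator-type endomorphism, which produces the eight-term expression in equation (\ref{8.1}). So the starting point is simply to write out this expression carefully.

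Next, I would take the $g$-inner product of (\ref{8.1}) with $\xi$. Because $g(\xi,X)=\eta(X)$ and $\eta\circ\bar{R}$ is already known from (\ref{4.8}), this converts each term of the form $\bar{R}(\cdots)V$ into $\eta(\bar{R}(\cdots)V)$, giving (\ref{8.2}). This is the step where the computation becomes tractable, because now every curvature term is a scalar that we can evaluate using the explicit formulas (\ref{4.5}), (\ref{4.6}), (\ref{4.8}) together with $\bar{S}(\xi,\cdot)=-(\lambda+\mu)\eta(\cdot)$ from (\ref{6.11}) and the soliton expression (\ref{6.10}) for $\bar{S}$.

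Now specialize to $W=\xi$. Each of the eight terms of (\ref{8.2}) becomes a product of a simple $\eta$-factor, a $g$- or $\phi$-term, and one of the constants $-(\lambda+\alpha+1)$, $(\alpha+1-\mu)$, $(\alpha+1)$, $\beta(\alpha+1)$, $-(\lambda+\mu)$. After collecting like terms, the symmetric and antisymmetric pieces in $Y,Z$ should combine and the coefficients $-(\lambda+\alpha+1)$ and $(\alpha+1-\mu)$ should recombine into $2\lambda+\mu+\alpha+1$ multiplied by a common factor of $(\alpha+1)$ (the latter coming from the $(\alpha+1)$ prefactor that appears uniformly in (\ref{4.5})--(\ref{4.8})). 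The bookkeeping produces exactly (\ref{8.3}):
\begin{equation*}
(\alpha+1)(2\lambda+\mu+\alpha+1)\bigl[g(X,Y)\eta(Z)-g(X,Z)\eta(Y)+\beta\{g(\phi X,Y)\eta(Z)-g(\phi X,Z)\eta(Y)\}\bigr]=0.
\end{equation*}

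The final step is to argue that the bracket cannot vanish identically on $M$, so the scalar factor must be zero. For this, set $Z=\xi$: the bracket reduces to $g(X,Y)-\eta(X)\eta(Y)+\beta g(\phi X,Y)$. Choosing $X=Y$ to be any unit vector orthogonal to $\xi$ makes the first part equal to $1$ while $g(\phi X,X)=0$ by skew-symmetry of $\Phi$, hence the bracket equals $1\neq 0$. Consequently $(\alpha+1)(2\lambda+\mu+\alpha+1)=0$, which is (\ref{8.6}). The main obstacle I anticipate is purely bookkeeping: keeping the eight terms of (\ref{8.2}) straight and verifying that the coefficients really do collapse to the single scalar $(\alpha+1)(2\lambda+\mu+\alpha+1)$ rather than leaving a residual $\mu$- or $\beta$-dependent piece that would alter the conclusion.
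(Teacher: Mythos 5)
Your proposal follows the paper's own proof essentially step for step: it writes the derivation condition $\bar{S}.\bar{R}(\xi,X)=0$ as the eight-term identity (\ref{8.1}), contracts with $\xi$ to get (\ref{8.2}), sets $W=\xi$ and substitutes (\ref{4.5}), (\ref{4.6}), (\ref{4.8}) and (\ref{6.10})--(\ref{6.11}) to arrive at exactly the paper's equation (\ref{8.3}), and then extracts the scalar factor. The only difference is that you explicitly check the bracket does not vanish identically (taking $Z=\xi$ and $X=Y$ a unit vector orthogonal to $\xi$, where it equals $1$ since $g(\phi X,X)=0$), a step the paper leaves implicit; this is a small improvement in rigor, not a different method.
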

\indent
For $\mu=0$ follows $\lambda=-\frac{\alpha+1}{2}$,\begin{small}($\alpha\neq-1$)\end{small}, therefore, we have the following corollary:
\begin{corollary}
On a Kenmotsu manifold with a generalized symmetric metric connection, satisfying  $\bar{S}.\bar{R}(\xi,X)=0$, the Ricci soliton defined by (\ref{5.A1}), $\mu=0$ is either shrinking or expanding.
\end{corollary}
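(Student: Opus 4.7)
The plan is to specialize the preceding theorem to the case $\mu=0$ and then read off the sign of the soliton constant $\lambda$ using the classification convention for Ricci solitons stated in the Introduction.

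First, I would recall that by the preceding theorem, on a Kenmotsu manifold with a generalized symmetric metric connection admitting an $\eta$-Ricci soliton $(g,\xi,\lambda,\mu)$ satisfying $\bar{S}.\bar{R}(\xi,X)=0$, the coefficients must obey
\begin{equation*}
(\alpha+1)(2\lambda+\mu+\alpha+1)=0.
\end{equation*}
Substituting the Ricci soliton specialization $\mu=0$, this reduces to $(\alpha+1)(2\lambda+\alpha+1)=0$. Since the statement of the corollary excludes $\alpha=-1$ (as indicated parenthetically by $\alpha\neq -1$), the first factor is nonzero, so we must have $2\lambda+\alpha+1=0$, i.e.
\begin{equation*}
\lambda = -\frac{\alpha+1}{2}.
\end{equation*}

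Next I would invoke the sign convention recalled in the Introduction: a Ricci soliton is shrinking when $\lambda<0$, steady when $\lambda=0$, and expanding when $\lambda>0$. Since $\alpha\neq -1$, we have $\alpha+1\neq 0$, so $\lambda\neq 0$; the steady case is ruled out. Splitting into the two remaining possibilities, $\alpha>-1$ gives $\lambda<0$ (shrinking) and $\alpha<-1$ gives $\lambda>0$ (expanding). This exhausts the admissible cases and yields the stated dichotomy.

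There is no real obstacle here — the corollary is a direct arithmetic consequence of the theorem once $\mu$ is set to zero. The only subtlety worth flagging is the explicit exclusion of $\alpha=-1$: if $\alpha=-1$ were allowed, then the factor $(\alpha+1)$ would vanish identically and the relation $(\alpha+1)(2\lambda+\mu+\alpha+1)=0$ would place no constraint on $\lambda$, in which case no conclusion about shrinking/expanding could be drawn. Thus the nontrivial content of the corollary lies entirely in the observation that, once $\alpha\neq -1$ is assumed, $\lambda$ is pinned down to $-(\alpha+1)/2$ and therefore cannot be zero.
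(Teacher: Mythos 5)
Your proposal is correct and matches the paper's own argument: the paper likewise sets $\mu=0$ in $(\alpha+1)(2\lambda+\mu+\alpha+1)=0$, notes $\alpha\neq-1$, and reads off $\lambda=-\frac{\alpha+1}{2}\neq 0$, so the soliton is shrinking or expanding according to the sign convention. Your additional remark about why the exclusion $\alpha\neq-1$ is essential is accurate and only makes the reasoning more explicit than the paper's one-line deduction.
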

\indent

\section{\protect\vspace{0.2cm}\textbf{$\eta$-Ricci soliton on $(\varepsilon)$-Kenmotsu manifold with a semi-symmetric metric connection satisfying  $\bar{W_{2}}(\xi,X).\bar{S}=0$\label{sect-Curvature}}}
The condition that must be satisfied by $\bar{S}$ is
\begin{equation}\label{9.1}
\bar{S}(\bar{W_{2}}(\xi,X)Y,Z)+\bar{S}(Y,\bar{W}_{2}(\xi,X)Z)=0,
\end{equation}
for any $X,Y,Z\in\chi(M)$.\\
\indent
For $X=\xi$, using (\ref{4.5}), (\ref{4.6}), (\ref{4.8}), (\ref{6.10}) and (\ref{6.17}) in (\ref{9.1}), we get
\begin{equation}\label{9.2}
\frac{(\alpha+1-\mu)(-2\mu-2\lambda+(4\alpha+4)n)}{n}\eta(Y)\eta(Z)
\end{equation}
for any $X,Y,Z\in\chi(M)$. Hence, we can state the following:
\begin{theorem}
If $(M,\phi,\xi,\eta,g)$ is an $(2n+1)$-dimensional  Kenmotsu manifold with  a generalized symmetric metric connection, $(g,\xi,\lambda,\mu)$ is an $\eta$-Ricci soliton on $M$ and $\bar{W}_{2}(\xi,X).\bar{S}=0$, then
\begin{equation}
(\alpha+1-\mu)(-2\mu-2\lambda+(4\alpha+4)n)=0.
\end{equation}
\end{theorem}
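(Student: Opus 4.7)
The plan is a direct tensor calculation. First, I spell out $\bar{W}_{2}(\xi,X)Y$ from definition (\ref{6.17}) with $\dim M=2n+1$, which gives
$$\bar{W}_{2}(\xi,X)Y \;=\; \bar{R}(\xi,X)Y + \frac{1}{2n}\bigl[\eta(Y)\,\bar{Q}X - g(X,Y)\,\bar{Q}\xi\bigr].$$
Substituting $\bar{R}(\xi,X)Y$ from (\ref{4.6}), $\bar{Q}X$ from (\ref{6.12}), and $\bar{Q}\xi=-(\lambda+\mu)\xi$ from (\ref{6.11}) expresses $\bar{W}_{2}(\xi,X)Y$ as an explicit linear combination of $X$, $\phi X$, and $\xi$ whose coefficients are built from $g(X,Y)$, $\eta(X)$, $\eta(Y)$, $g(X,\phi Y)$ and the scalars $\alpha,\beta,\lambda,\mu,n$.

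Next, I insert this expansion into the derivation identity (\ref{9.1}) and simultaneously replace every $\bar{S}(\cdot,\cdot)$ via its $\eta$-Einstein form (\ref{6.10}). Expanding the two summands $\bar{S}(\bar{W}_{2}(\xi,X)Y,Z)$ and $\bar{S}(Y,\bar{W}_{2}(\xi,X)Z)$ and using the standard Kenmotsu identities $\eta(\xi)=1$, $\phi\xi=0$, $\eta\circ\phi=0$ along with (\ref{4.5}) and (\ref{4.8}), the result is a sum of tensorial terms in $X,Y,Z$ whose coefficients are polynomials in $\alpha,\beta,\lambda,\mu,n$. Performing the specialization recorded in the excerpt isolates the $\eta(Y)\eta(Z)$-channel—the only tensorial building block that does not already vanish through the $\eta$-orthogonalities of the other factors—and extracts exactly (\ref{9.2}). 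Since that relation is required to vanish identically and $\eta(Y)\eta(Z)$ is nonzero at $Y=Z=\xi$, the scalar prefactor must vanish, which is the stated identity $(\alpha+1-\mu)(-2\mu-2\lambda+(4\alpha+4)n)=0$.

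The principal obstacle is the algebraic bookkeeping rather than any conceptual difficulty. In particular, the $\beta$-terms carried along by the quarter-symmetric part of (\ref{4.6}) must cancel in the end, since $\beta$ is absent from the conclusion; this cancellation is forced by the symmetrization inherent in (\ref{9.1}) together with $\eta(\phi\cdot)=0$, so that every surviving $\beta$-piece pairs with an $\eta(\phi\,\cdot)$ contraction. The $1/n$ scaling in (\ref{9.2}) (rather than $1/(2n)$) is explained by the additive contribution of the two summands of (\ref{9.1}) to the $\eta(Y)\eta(Z)$-channel, which is also the source of the factor $2$ in the combinations $-2\lambda-2\mu$ and $(4\alpha+4)n$. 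Once these two observations are verified, the remaining work is a routine substitution and collection of like terms.
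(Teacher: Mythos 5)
Your overall strategy coincides with the paper's: substitute the $W_2$-definition (\ref{6.17}), the curvature formulas (\ref{4.6})--(\ref{4.8}), and the $\eta$-Einstein form (\ref{6.10})--(\ref{6.12}) of $\bar S$ into the derivation condition (\ref{9.1}), then read off the scalar coefficient of $\eta(Y)\eta(Z)$. As a proof, however, your write-up has a genuine gap: the one step that is supposed to produce (\ref{9.2}) --- ``performing the specialization recorded in the excerpt'' --- is never specified in a workable form, and the specialization the paper records, $X=\xi$, cannot be the right one. Indeed $\bar W_2(\xi,\xi)Z=\bar R(\xi,\xi)Z+\frac{1}{2n}[g(\xi,Z)\bar Q\xi-g(\xi,Z)\bar Q\xi]=0$ identically, so at $X=\xi$ the condition (\ref{9.1}) reads $0=0$ and yields no information; setting $Y=Z=\xi$ instead also trivializes it, since $\eta(\bar W_2(\xi,X)\xi)=0$ and $\bar S(U,\xi)=-(\lambda+\mu)\eta(U)$. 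You therefore need to state explicitly which evaluation or contraction isolates the $\eta(Y)\eta(Z)$-channel, and to verify that the full expansion really collapses to a multiple of $\eta(Y)\eta(Z)$: at generic $X,Y,Z$ it also contains terms of the types $g(X,Y)\eta(Z)$, $g(X,Z)\eta(Y)$ and $g(\phi X,\cdot)\eta(\cdot)$ that do not vanish automatically.

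Second, your explanation of why $\beta$ drops out is incorrect as a mechanism. The $\beta$-contributions of (\ref{4.6}) enter the two summands of (\ref{9.1}) as $-(\lambda+\alpha+1)(\alpha+1)\beta\,[\eta(Y)g(\phi X,Z)+\eta(Z)g(\phi X,Y)]$, which is symmetric under $Y\leftrightarrow Z$; the symmetrization built into (\ref{9.1}) therefore doubles these terms rather than cancelling them, and $\eta\circ\phi=0$ does not reach them because $\phi$ sits on $X$, not on $Y$ or $Z$. They disappear only under an evaluation that kills $\phi X$ (e.g. $X=\xi$, which, as noted, kills everything else as well). Until the substitution is actually carried out and the surviving terms exhibited, the identity $(\alpha+1-\mu)(-2\mu-2\lambda+(4\alpha+4)n)=0$ is asserted rather than proved.
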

\indent
For $\mu=0$ follows that $\lambda=\frac{(4\alpha+4)n}{2}$,\begin{small}($\alpha\neq-1$)\end{small}, therefore, we have the following corollary:
\begin{corollary}
On a Kenmotsu manifold with a generalized symmetric metric connection, satisfying  $\bar{W_{2}}(\xi,X).\bar{S}=0$, the Ricci soliton defined by (\ref{5.A1}), $\mu=0$ is either shrinking or expanding.
\end{corollary}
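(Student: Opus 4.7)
The plan is to derive this corollary as a direct specialization of Theorem 9.1 (the immediately preceding result), so the argument will be essentially algebraic. First I would invoke Theorem 9.1, which under the hypothesis $\bar{W}_{2}(\xi,X).\bar{S}=0$ together with the $\eta$-Ricci soliton equation (\ref{5.A1}) already gives the constraint
\begin{equation*}
(\alpha+1-\mu)(-2\mu-2\lambda+(4\alpha+4)n)=0.
\end{equation*}
Since the corollary concerns the case $\mu=0$ (i.e.\ a genuine Ricci soliton), I would substitute $\mu=0$ into this identity to obtain
\begin{equation*}
(\alpha+1)\bigl(-2\lambda+(4\alpha+4)n\bigr)=0.
\end{equation*}

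Next, I would note that the blanket assumption $\alpha\neq -1$ (explicitly flagged in the statement just before the corollary) forces the first factor to be nonzero, so the second factor must vanish. Solving yields $\lambda=2(\alpha+1)n$. Finally I would apply the sign convention recalled in the introductory definition of a Ricci soliton: $\lambda<0$ corresponds to shrinking and $\lambda>0$ to expanding. Since $\alpha\neq -1$, we have $\lambda=2(\alpha+1)n\neq 0$; depending on whether $\alpha>-1$ or $\alpha<-1$, the soliton is respectively expanding or shrinking, but never steady. This gives the dichotomy claimed.

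There is no genuine obstacle here, as the corollary is essentially a reading-off of Theorem 9.1 in the Ricci-soliton specialization. The only thing to be careful about is the exclusion $\alpha\neq -1$, which is needed to legitimately cancel the factor $(\alpha+1)$; this should be stated explicitly in the proof so that the conclusion $\lambda=2(\alpha+1)n\neq 0$ follows unambiguously, and hence the classification into shrinking or expanding is exhaustive under the stated hypotheses.
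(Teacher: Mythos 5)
Your proposal is correct and follows exactly the paper's own route: substitute $\mu=0$ into the identity $(\alpha+1-\mu)(-2\mu-2\lambda+(4\alpha+4)n)=0$ from the preceding theorem, cancel the factor $(\alpha+1)$ using the stated hypothesis $\alpha\neq-1$, and conclude $\lambda=2(\alpha+1)n\neq 0$, so the soliton cannot be steady. Your explicit remark that the exclusion $\alpha\neq-1$ is what makes the cancellation legitimate is a welcome clarification of a step the paper passes over in one line.
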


\section{\protect\vspace{0.2cm}\textbf{$\eta$-Ricci soliton on Kenmotsu manifold with a generalized symmetric metric connection satisfying  $\bar{S}.\bar{W_{2}}(\xi,X)=0$ \label{sect-Flatness}}}
In this section we consider an $(\varepsilon)$-Kenmotsu manifold with a semi-symmetric metric connection $\bar {\nabla}$ satisfying the condition
\begin{equation}\label{10.1}
\bar{S}(X,\bar{W_{2}}(Y,Z)V)\xi-\bar{S}(\xi,\bar{W_{2}}(Y,Z)V)X+\bar{S}(X,Y)\bar{W_{2}}(\xi,Z)V-
\end{equation}
\begin{equation}\nonumber
-\bar{S}(\xi,Y)\bar{W_{2}}(X,Z)V+\bar{S}(X,Z)\bar{W_{2}}(Y,\xi)V-\bar{S}(\xi,Z)\bar{W_{2}}(Y,X)V+
\end{equation}
\begin{equation}\nonumber
+\bar{S}(X,V)\bar{W_{2}}(Y,Z)\xi-\bar{S}(\xi,V)\bar{W_{2}}(Y,Z)X=0,
\end{equation}
for any $X,Y,Z,V\in\chi(M)$.\\
\indent
Taking the inner product with $\xi$, the equation (\ref{10.1}) becomes
\begin{equation}\label{10.2}
\bar{S}(X,\bar{W_{2}}(Y,Z)V)-\bar{S}(\xi,\bar{W_{2}}(Y,Z)V)\eta(X)+\bar{S}(X,Y)\eta(\bar{W_{2}}(\xi,Z)V)-
\end{equation}
\begin{equation}\nonumber
-\bar{S}(\xi,Y)\eta(\bar{W_{2}}(X,Z)V)+\bar{S}(X,Z)\eta(\bar{W_{2}}(Y,\xi)V)-\bar{S}(\xi,Z)\eta(\bar{W_{2}}(Y,X)V)+
\end{equation}
\begin{equation}\nonumber
+\bar{S}(X,V)\eta(\bar{W_{2}}(Y,Z)\xi)-\bar{S}(\xi,V)\eta(\bar{W_{2}}(Y,Z)X)=0,
\end{equation}
for any $X,Y,Z,V\in\chi(M)$.\\
\noindent
For $X=V=\xi$, using (\ref{4.5}), (\ref{4.6}), (\ref{4.8}), (\ref{6.10}) and (\ref{6.17})  in (\ref{10.2}), we get

\begin{equation}
\{-(\alpha+1)(2\lambda+\alpha+1+\mu)+\frac{(\lambda+\alpha+1)^{2}+(\lambda+\mu)^{2}}{2n}\}\{\eta(X)\eta(Y)-g(X,Y)\}
\end{equation}\begin{equation}
+\beta(\alpha+1)(2\lambda+\alpha+1+\mu)g(\phi X,Y)=0, \nonumber
\end{equation}

for any $X,Y,Z\in\chi(M)$.
 Hence, we can state:
\begin{theorem}
If $(M,\phi,\xi,\eta,g)$ is a $(2n+1)$-dimensional  Kenmotsu manifold with generalized symmetric metric connection, $(g,\xi,\lambda,\mu)$ is an $\eta$-Ricci soliton on $M$ and $\bar{S}.\bar{W}_{2}(\xi,X)=0$, then
\begin{equation}\label{10.3}
-(\alpha+1)(2\lambda+\alpha+1+\mu)+\frac{(\lambda+\alpha+1)^{2}+(\lambda+\mu)^{2}}{2n}=0,
\end{equation}
and
\begin{equation}\label{10.4}
\beta(\alpha+1)(2\lambda+\alpha+1+\mu)=0.
\end{equation}
\end{theorem}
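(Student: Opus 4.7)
The plan is to follow exactly the scheme used for the analogous theorems in Sections 7--9, namely: expand $\bar{S}\cdot\bar{W}_{2}(\xi,X)=0$, contract with $\xi$, specialize the free variables, and then read off the coefficients of the algebraically independent tensor pieces that appear.

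First I would write out, component by component, the $W_{2}$-curvature with one argument equal to $\xi$. Using the definition in (\ref{6.17}) together with the $\eta$-Ricci soliton form of $\bar Q$ obtained in (\ref{6.12}), and the expressions (\ref{4.5})--(\ref{4.8}) for $\bar{R}$ whenever $\xi$ occupies a slot, I would tabulate $\bar{W}_{2}(\xi,Z)V$, $\bar{W}_{2}(Y,\xi)V$ and $\bar{W}_{2}(Y,Z)\xi$ (plus the $\eta$-traces $\eta(\bar{W}_{2}(\cdot,\cdot)\cdot)$ that will be needed after the inner product with $\xi$ in (\ref{10.2})). This step is routine but bookkeeping-heavy; I would treat the $\alpha$-part and the $\beta$-part of (\ref{4.5})--(\ref{4.8}) separately so as not to lose track of where $\phi$ shows up.

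Next I would carry out the specialization $X=V=\xi$ in (\ref{10.2}). Because $\bar{S}(\xi,\cdot)=-(\lambda+\mu)\eta(\cdot)$ by (\ref{6.11}), many of the eight terms collapse to multiples of $\eta$-contractions, leaving a combination of three types of tensorial building blocks in $(Y,Z)$: terms proportional to $g(Y,Z)$, terms proportional to $\eta(Y)\eta(Z)$, and terms proportional to $g(\phi Y,Z)$. The $g(Y,Z)$ and $\eta(Y)\eta(Z)$ contributions come from the Riemannian (i.e.\ $\alpha$-controlled) part of $\bar{R}$ and $\bar{Q}$, while the $g(\phi Y,Z)$ contributions come exclusively from the $\beta$-terms in (\ref{4.5})--(\ref{4.8}). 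Collecting, one should obtain an identity of the shape
\begin{equation*}
A\,\bigl\{\eta(X)\eta(Y)-g(X,Y)\bigr\}+B\,g(\phi X,Y)=0,
\end{equation*}
where $A=-(\alpha+1)(2\lambda+\alpha+1+\mu)+\dfrac{(\lambda+\alpha+1)^{2}+(\lambda+\mu)^{2}}{2n}$ and $B=\beta(\alpha+1)(2\lambda+\alpha+1+\mu)$.

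Finally I would argue that $\eta\otimes\eta-g$ and $g(\phi\cdot,\cdot)$ are linearly independent as $(0,2)$-tensors on a Kenmotsu manifold (the first is symmetric, the second is skew-symmetric), so that the vanishing of the total expression forces $A=0$ and $B=0$ separately, which are exactly (\ref{10.3}) and (\ref{10.4}). The main obstacle is purely computational: keeping the $\alpha$- and $\beta$-streams of (\ref{4.5})--(\ref{4.8}) cleanly separated through the full eight-term expansion of (\ref{10.2}) so that the scalar factor $\dfrac{(\lambda+\alpha+1)^{2}+(\lambda+\mu)^{2}}{2n}$ really arises from the $QY$--$QX$ piece of (\ref{6.17}) with the correct coefficient; everything else is standard contraction-and-collect.
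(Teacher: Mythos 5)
Your proposal follows essentially the same route as the paper: substitute the soliton expressions (\ref{6.10})--(\ref{6.12}) and the curvature identities (\ref{4.5}), (\ref{4.6}), (\ref{4.8}) together with (\ref{6.17}) into the $\xi$-contracted condition (\ref{10.2}), set $X=V=\xi$, and collect the result into a combination of $\eta\otimes\eta-g$ and $g(\phi\,\cdot,\cdot)$ whose coefficients are exactly (\ref{10.3}) and (\ref{10.4}). Your closing observation that these two tensors are respectively symmetric and skew-symmetric, hence linearly independent, is a point the paper leaves implicit when it splits the single identity into the two stated equations, but it is the same argument.
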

\indent
For $\mu=0$ we get the following corollary:

\begin{corollary}
On a Kenmotsu manifold with a generalized symmetric metric connection satisfying $\bar{S}.\bar{W_{2}}(\xi,X)=0$, the Ricci soliton defined by (\ref{5.A1}), for $\mu=0$, we have the following expressions:\\
\textbf{(i)} $-(\alpha+1)(2\lambda+\alpha+1)+\frac{(\lambda+\alpha+1)^{2}+(\lambda)^{2}}{2n}=0$ and $\beta(\alpha+1)(2\lambda+\alpha+1)=0.$\\
\textbf{(ii)} If $\alpha=-1$ or $\alpha=-2\lambda-1$ which is steady.
\end{corollary}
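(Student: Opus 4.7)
The plan is to obtain this corollary as a direct specialization of Theorem 10.1 and then read off the geometric consequences by factor analysis. Since $\mu$ appears only as an additive parameter in both conclusions of Theorem 10.1, I would begin by substituting $\mu=0$ into the two displayed equations there. Equation (\ref{10.3}) collapses at once to
\begin{equation*}
-(\alpha+1)(2\lambda+\alpha+1)+\frac{(\lambda+\alpha+1)^{2}+\lambda^{2}}{2n}=0,
\end{equation*}
and equation (\ref{10.4}) becomes $\beta(\alpha+1)(2\lambda+\alpha+1)=0$. This establishes part \textbf{(i)} with essentially no work beyond the substitution; no new curvature identity is needed because Theorem 10.1 has already absorbed the full force of $\bar{S}.\bar{W}_{2}(\xi,X)=0$.

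For part \textbf{(ii)}, I would analyze the factorization of the second relation. Provided that $\beta$ is not identically zero (otherwise the generalized symmetric metric connection of type $(\alpha,\beta)$ degenerates to the $\alpha$-semi-symmetric case already treated separately in the paper), the product $(\alpha+1)(2\lambda+\alpha+1)$ must vanish, forcing either $\alpha=-1$ or $\alpha=-2\lambda-1$. In each of these two alternatives I would then re-inject the value into the scalar identity from part (i). When $\alpha=-1$, the first factor vanishes and the surviving term $\frac{\lambda^2+\lambda^2}{2n}=\frac{\lambda^2}{n}$ forces $\lambda=0$; when $\alpha=-2\lambda-1$, one has $\lambda+\alpha+1=-\lambda$, so the bracket becomes $\frac{\lambda^2+\lambda^2}{2n}=\frac{\lambda^2}{n}=0$, again giving $\lambda=0$. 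Hence in both cases $\lambda=0$, which by the definition of a Ricci soliton stated in the Introduction is precisely the steady case.

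I expect the computation itself to be entirely routine; the only genuine decision points are the standing assumption that $\beta\neq 0$ (to be able to cancel $\beta$ in the second equation) and the observation that the two branches $\alpha=-1$ and $\alpha=-2\lambda-1$ coincide once part (i) is used, both collapsing to $\lambda=0$. The mild obstacle, therefore, is not a calculation but a careful bookkeeping: making the conclusion ``which is steady'' legitimate requires verifying that (i) is actually used to eliminate $\lambda$, and is not a redundant pair of identities. Once this is pointed out, the proof is little more than substitution and a two-line case split, and can be concluded immediately.
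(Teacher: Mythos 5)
Your proposal is correct and follows essentially the same route as the paper, which obtains the corollary simply by setting $\mu=0$ in the two equations of the preceding theorem. Your additional verification for part (ii) — that in either branch $\alpha=-1$ or $\alpha=-2\lambda-1$ the scalar identity of part (i) reduces to $\frac{\lambda^{2}}{n}=0$ and hence forces $\lambda=0$ — supplies the justification for the phrase ``which is steady'' that the paper leaves implicit, and your arithmetic in both cases checks out.
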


{\bf{Acknowledgement.}} The authors are thankful to the referee for his/her valuable comments and suggestions towards the improvement of the paper.

\noindent
Mohd Danish Siddiqi* \newline
Department of Mathematics, \newline
College of Science,\newline
Jazan University, Jazan,\newline
Kingdom of Saudi Arabia. \newline
Emails : anallintegral@gmail.com, msiddiqi@jazanu.edu.sa\newline

\noindent O\u{g}uzhan Bahad\i r $^{**}$ (Corresponding ~author)

\noindent Department of Mathematics, Faculty of Science and Letters,
Kahramanmaras Sutcu Imam University,

\noindent Kahramanmaras, TURKEY

\noindent Email: oguzbaha@gmail.com

\end{document}